\documentclass[a4paper, 12pt, oneside]{amsart}

\textwidth 16 cm 
\usepackage[top=35mm,bottom=15mm,left=20mm,right=20mm]{geometry}
\setlength{\marginparwidth}{2cm}										%%potrzebne do komentarzy

\usepackage{mathrsfs}
\usepackage{amsfonts}
\usepackage{amssymb}
\usepackage{amsxtra}
\usepackage{color}
\usepackage{dsfont}
\usepackage[colorinlistoftodos,  textsize=tiny]{todonotes} 					 %% color comments in pdf
\presetkeys{todonotes}{fancyline, color=blue!30}{}

\hyphenation{characteristic}

\usepackage[english,polish]{babel}
\usepackage[colorlinks,citecolor=blue,urlcolor=blue,bookmarks=true]{hyperref}
\hypersetup{
pdfpagemode=UseNone,
pdfstartview=FitH,
pdfdisplaydoctitle=true,
pdfborder={0 0 0}, % No link borders
pdftitle={Heat content for convolution semigroups},
pdfauthor={Wojciech Cygan, Tomasz Grzywny},
pdflang=en-US
}

\newcommand{\sprod}[2]{\langle {#1}, {#2}\rangle}
\newcommand{\norm}[1]{{\lVert #1 \rVert}}

\newcommand{\RR}{\mathbb{R}}
\newcommand{\PP}{\mathbb{P}}
\newcommand{\EE}{\mathbb{E}}

\newcommand{\IndFun}{\textbf{1}_}
\newcommand{\Per}{\mathrm{Per}}
\newcommand{\calR}{\mathcal{R}}

\newcommand{\ud}{\mathrm{d}}
\newcommand{\psiI}{\psi^{-}(1/t)}
\newcommand{\g}{g_\Omega}
\newcommand{\RInf}{\calR_{\alpha} }

\newcommand{\pl}[1]{\foreignlanguage{polish}{#1}}

\newtheorem{theorem}{Theorem}
\newtheorem{proposition}{Proposition}
\newtheorem{lemma}{Lemma}

\theoremstyle{definition}
\newtheorem{example}{Example}

\title{Heat content for convolution semigroups}

\author{Wojciech Cygan}
\thanks{W.~Cygan was supported by National Science Centre (Poland): grant DEC-2013/11/N/ST1/03605 and by Austrian Science Fund project FWF P24028.}

\author{Tomasz Grzywny}
 \thanks{T.~Grzywny was partially supported by  National Science Centre (Poland): grant 2015/17/B/ST1/01043.}

\address{
		Wojciech Cygan\\
		Instytut Matematyczny\\
		Uniwersytet \pl{Wroc{\lll}awski}\newline
		Pl. Grun\-waldzki 2/4\\
		50-384 \pl{Wroc{\lll}aw}\\
		Poland}
\email{wojciech.cygan@uwr.edu.pl}

\address{
	Tomasz Grzywny\\
	 \pl{Wydzia{\lll}} Matematyki\\
	Politechnika \pl{Wroc{\lll}awska}\newline
	Wyb. \pl{Wyspia\'{n}skiego} 27
	50-370 \pl{Wroc\l{}aw}\\
	Poland}
\email{tomasz.grzywny@pwr.edu.pl}

\subjclass[2010]{60G51, 60G52, 60J75, 35K05}
\keywords{asymptotic behaviour, characteristic exponent, heat content,
	 isotropic L\'{e}vy process, perimeter, regular variation}

\begin{document}
\selectlanguage{english}

\begin{abstract}
Let $\mathbf{X}=\{X_t\}_{t\geq 0}$ be a L\'{e}vy process in $\RR^d$ and $\Omega$ be an open subset of $\RR^d$ with finite Lebesgue measure. In this article we consider the quantity 
$H (t) = \int_{\Omega}\PP_{x} (X_t\in \Omega ^c)\, \ud x$ 
which is called the heat content.
We study its asymptotic behaviour as $t$ goes to zero for isotropic L\'{e}vy processes under some mild assumptions on the characteristic exponent. We also treat the class of L\'{e}vy processes with finite variation in full generality.
\end{abstract}

\maketitle
\section{Introduction}

Let $\mathbf{X}=\{X_t\}_{t\geq 0}$ be a L\'{e}vy process in $\RR ^d$ with the distribution $\PP$  such that $X_0=0$. 
We denote by $p_t(\ud x)$ the distribution of the random variable $X_t$ and 
we use the standard notation $\PP_x$ for the distribution related to the process $\mathbf{X}$ started at $x\in \RR^d$. The characteristic exponent $\psi (x)$, $x\in \RR ^d$, of the process $\mathbf{X}$ is given by the formula
\begin{align}\label{charact_expo}
\psi(x) = \sprod{x}{Ax} - i\sprod{x}{\gamma } - 
\int_{\RR ^d}\left(e^{i\sprod{x}{y}} - 1 - i\sprod{x}{y} \IndFun{\{\norm{y} \leq 1\}} \right) \nu(\ud y) ,
\end{align}
where $A$ is a symmetric non-negative definite $d\times d$ matrix, $\gamma\in \RR ^d$ and $\nu$ is a L\'{e}vy measure, that is
\begin{align}\label{Levy_measure}
\nu(\{0\})=0\quad \mathrm{and}\quad \int_{\RR ^d} \left( 1\wedge \norm{y}^2 \right)\, \nu(\ud y) <\infty .
\end{align}
%% HEAT CONTENT

Let $\Omega$ and $\Omega _0$ be two non-empty subsets of $\RR ^d$ such that $\Omega$ is open and its Lebesgue measure $|\Omega|$  is finite. 
We consider the following quantity associated with the process $\mathbf{X}$,
\begin{align*}
H _{\Omega , \Omega _0} (t) = \int_{\Omega}\PP_{x} (X_t\in \Omega _0)\, \ud x =  \int_{\Omega}\int_{\Omega_0-x}p_t( \ud y)\ud x
\end{align*} 
and we use the notation
\begin{align}\label{heat_cont-H}
H_\Omega (t)= H_{\Omega , \Omega } (t)\quad \mathrm{and}\quad H(t)=H _{\Omega , \Omega^{c}} (t). 
\end{align}
The main goal of the present article  is to study the asymptotic behaviour of $H_\Omega (t)$ as $t$ goes to zero. 
We observe that
\begin{align*}
H_\Omega (t) = |\Omega| - H(t),
\end{align*}
and thus it suffices to work with the function $H(t)$. The function $u(t,x) = \int_{\Omega-x}p_t(\ud y)$ is the weak solution of the initial value problem
\begin{align*}
\frac{\partial}{\partial t}u(t,x) &= -\mathcal{L}\, u(t,x),\quad t>0,\, x\in \RR^d,\\
u(0,x) &= \IndFun{\Omega}(x),
\end{align*}
where $\mathcal{L}$ is the infinitesimal generator of the process $\mathbf{X}$, see \cite[Section 31]{Sato}. Therefore, $H_\Omega (t)$ can be interpreted as the amount of \textit{heat} in $\Omega$ if its initial temperature is one whereas the initial temperature of $\Omega^c$ is zero. In paper \cite{vanDenBerg1_POT}, the author calls the quantity $H_\Omega (t)$ \textit{heat content} and we will use the same terminology. There are a lot of articles where bounds and asymptotic behaviour of the heat content related to Brownian motion, either on $\RR^d$ or on compact manifolds, were studied, see \cite{vanDenBerg1_POT}, \cite{vanDenBerg1}, \cite{vanDenBerg2}, \cite{vanDenBerg3}, \cite{vanDenBerg_4}, \cite{vanDenBerg5}. Recently Acu\~{n}a Valverde \cite{Valverde1} investigated the heat content for isotropic stable processes in $\RR^d$, see also \cite{Valverde2} and \cite{Valverde3}. In this paper we study the small time behaviour of the heat content associated with rather general L\'{e}vy processes in $\RR^d$.

Before we state our results we recall the notion of perimeter. Following \cite[Section 3.3]{Ambrosio_2000}, for any measurable set\footnote{All sets in the paper are assumed to be Lebesgue measurable.} $\Omega \subset \RR ^d$ we define its perimeter $\Per(\Omega)$ as 
\begin{align}\label{Perimeter_def}
\Per(\Omega) = \sup \left\{ \int_{\RR^d}\IndFun{\Omega}(x)\mathrm{div}\, \phi (x)\, \ud x:\, \phi \in C_c^1(\RR^d,\RR^d),\, \norm{\phi}_{\infty}\leq 1 \right\}.
\end{align}
We say that $\Omega$ is of finite perimeter if $\Per(\Omega)<\infty$. It was shown in \cite{Miranda1, Miranda2, Preunkert} that if $\Omega$ is an open set in $\RR^d$ with finite Lebesgue measure and of finite perimeter then
\begin{align*}
\Per(\Omega) = \pi^{1/2}\lim_{t\to 0} t^{-1/2}\int_{\Omega}\int_{\Omega^c}p_t^{(2)}(x,y)\, \ud y\, \ud x,
\end{align*}
where
\begin{align*}
p_t^{(2)}(x,y) = (4\pi t)^{-d/2}e^{-\norm{x-y}^2/4t}
\end{align*}
is the transition density of the Brownian motion $B_{t}$ in $\RR^d$. We also notice that for a non-empty and open set $\Omega$, $\Per(\Omega)>0$.

Recall that for the L\'{e}vy process $\mathbf{X}$ with the transition probability $p_t(\ud x)$ and the L\'{e}vy measure $\nu$ we have
\begin{align*}
\lim_{t\to 0} t^{-1} p_t(\ud x) = \nu (\ud x),\quad \text{vaguely on }  \RR ^d\setminus\{0\}.
\end{align*}
Therefore, we introduce the perimeter $\Per_{\mathbf{X}}(\Omega)$ related to the process $\mathbf{X}$ setting
\begin{align}\label{X_perimeter}
\Per_{\mathbf{X}}(\Omega)= \int_{\Omega}\int_{\Omega ^c-x}\nu (\ud y)\, \ud x .
\end{align}
For instance, if $\mathbf{X}$ is the isotropic (rotationally invariant) $\alpha$-stable process, denoted by $S^{(\alpha)}=(S^{(\alpha)}_t)_{t\geq 0}$, we obtain the well-known $\alpha$-perimeter, which for $0<\alpha <1$ is given by
\begin{align*}
\Per_{S^{(\alpha)}}(\Omega) = \int_{\Omega}\int_{\Omega ^c} \frac{\ud y\, \ud x}{\norm{x-y}^{d+\alpha}}.
\end{align*}
It was proved in \cite{Fusco} that if $\Omega$ has finite Lebesgue measure and is of finite perimeter then $\Per_{S^{(\alpha)}}(\Omega)$ is finite. 
In the present paper we prove (see Lemma \ref{Lemma_Per_X}) that for any L\'{e}vy process with finite variation, cf. \cite[Section 21]{Sato}, and for $\Omega$ of finite measure and of finite perimeter $\Per(\Omega) <\infty$ the quantity $\Per_{\mathbf{X}}(\Omega)$ is finite as well.

After Pruitt \cite{Pruitt}, we consider the following function related to the L\'{e}vy process $\mathbf{X}$. For any $r>0$,
\begin{align}
\begin{aligned}\label{Pruitt_Function}
h(r)= \norm{A}r^{-2} &+ 
 r^{-1} \Big\lvert \gamma +\int_{\RR ^d} y \left(\textbf{1}_{\norm{y} < r}-\textbf{1}_{\norm{y} < 1}\right){}& \nu(\ud y)\Big\rvert  \\
& +\int_{\RR ^d} \left( 1\wedge \norm{y}^2r^{-2}\right) \, \nu(\ud y),
\end{aligned}
\end{align}
where $(A,\gamma ,\nu)$ is the triplet from \eqref{charact_expo} and $\Vert A\Vert = \max_{\Vert x\Vert =1} \norm{Ax}$.

Our first result gives a general upper bound for the heat content related to any L\'{e}vy process in $\RR^d$. 
\begin{theorem}\label{Thm_d>1}
Let $\Omega \subset \RR^d$ be an open set of finite measure $|\Omega| $ and of finite perimeter $\Per (\Omega)$, and set $R=2|\Omega|/\Per(\Omega)$. Let $\mathbf{X}$ be a L\'{e}vy process in $\RR^d$.
Then there is a constant $C_1=C_1(d) >0$ which does not depend on the set $\Omega$ such that 
\begin{align*}
H(t) &\leq C_1\, t\, \Per (\Omega) \int_{\frac{R}{2} \wedge h^{-1}(1/t)}^{R} h(r)\, \ud r,\quad t>0.
\end{align*}
\end{theorem}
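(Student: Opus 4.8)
The plan is to reduce $H(t)$ to the integral against $p_t$ of a purely geometric quantity, bound that quantity by the perimeter, and then control the resulting radial integral by Pruitt's function $h$. I would begin with Tonelli's theorem. Writing the inner probability as $\PP_x(X_t\in\Omega^c)=\int_{\RR^d}\IndFun{\{x+y\notin\Omega\}}\,p_t(\ud y)$ and interchanging the order of integration gives
\[
H(t)=\int_{\RR^d}\phi(y)\,p_t(\ud y),\qquad \phi(y):=\bigl|\Omega\setminus(\Omega-y)\bigr|.
\]
Since $\phi(y)=\int_{\RR^d}\IndFun{\Omega}(x)\bigl(1-\IndFun{\Omega}(x+y)\bigr)\,\ud x\le \int_{\RR^d}\bigl|\IndFun{\Omega}(x)-\IndFun{\Omega}(x+y)\bigr|\,\ud x=\bigl|\Omega\,\triangle\,(\Omega-y)\bigr|$, the translation estimate for functions of bounded variation applied to $\IndFun{\Omega}$ (whose total variation is exactly $\Per(\Omega)$) yields $\phi(y)\le \Per(\Omega)\,\norm{y}$, while trivially $\phi(y)\le|\Omega|$. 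As $|\Omega|=\tfrac{R}{2}\Per(\Omega)$ by the definition of $R$, the two bounds combine to $\phi(y)\le\Per(\Omega)\,(\tfrac{R}{2}\wedge\norm{y})$, and the layer-cake formula gives
\[
H(t)\le \Per(\Omega)\int_{\RR^d}\Bigl(\tfrac{R}{2}\wedge\norm{y}\Bigr)\,p_t(\ud y)=\Per(\Omega)\int_0^{R/2}\PP(\norm{X_t}>s)\,\ud s.
\]

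Next I would invoke Pruitt's tail bound from \cite{Pruitt}: there is a dimensional constant $C$ such that $\PP(\norm{X_t}\ge s)\le C\,t\,h(s)$ for all $s,t>0$ (obtained through the exit time of the ball $B(0,s)$), together with the trivial bound $\PP\le 1$. Treating $h$ as decreasing with inverse $h^{-1}$ (as in the statement), I split the radial integral at $s_0:=\tfrac{R}{2}\wedge h^{-1}(1/t)$, using $\PP\le1$ below $s_0$ and $\PP\le Cth$ above it, to obtain $\int_0^{R/2}\PP(\norm{X_t}>s)\,\ud s\le s_0+Ct\int_{s_0}^{R/2}h(s)\,\ud s$. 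It then remains to absorb the first term and to widen the upper limit from $R/2$ to $R$. For this I would use the scaling property $h(r)\le C'(\rho/r)^2h(\rho)$ for $0<r\le\rho$ (the $r^2h(r)$ almost-monotonicity, also available from \cite{Pruitt}). In the regime $h^{-1}(1/t)\le R/2$ this furnishes a lower bound $h(s)\ge c\,h(s_0)$ on $[s_0,2s_0]$, whence $\int_{s_0}^{2s_0}h\ge c\,s_0\,h(s_0)=c\,s_0/t$ and therefore $s_0\le C''t\int_{s_0}^{R}h$; in the complementary regime $s_0=R/2$, so that $t\,h(R/2)\ge1$, the same device on $[R/2,R]$ gives $R/2\le C''t\int_{R/2}^{R}h$. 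In either case the contributions collapse into $C_1\,t\,\Per(\Omega)\int_{R/2\wedge h^{-1}(1/t)}^{R}h(r)\,\ud r$.

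The main obstacle is this last step: converting $\int_0^{R/2}\PP(\norm{X_t}>s)\,\ud s$ into the stated expression with exactly the prescribed limits of integration. It rests on Pruitt's estimates being available for \emph{general} L\'evy processes and, decisively, on the scaling of $h$, which is what lets the trivial near-origin contribution $s_0$ be re-expressed as a multiple of $t\int h$ and what stretches the range from $R/2$ to $R$. By contrast, the reduction to $\phi$ and the perimeter bound are soft once the bounded-variation translation inequality is invoked, and the constant $C_1$ is independent of $\Omega$ precisely because every constant entering the argument (Pruitt's $C$, the scaling constant $C'$, and the unit constant in the $BV$ estimate) depends at most on the dimension $d$.
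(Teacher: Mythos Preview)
Your proposal is correct and follows essentially the same route as the paper: you express $H(t)$ as the $p_t$-integral of $\phi(y)=g_\Omega(0)-g_\Omega(y)$, bound it by $\Per(\Omega)\bigl(\tfrac{R}{2}\wedge\norm{y}\bigr)$ via the BV translation estimate (the paper's Proposition~\ref{g_properties}(iv)), reduce to $\int_0^{R/2}\PP(\norm{X_t}>s)\,\ud s$, and then absorb the near-origin piece using Pruitt's bound together with the doubling $h(2r)\ge h(r)/4$---exactly the mechanism of Lemma~\ref{Lemma_1}(i). The only cosmetic difference is that the paper splits $H(t)$ at $\norm{y}=R$ rather than taking the minimum, arriving at the intermediate bound $\int_0^{R}\PP(\norm{X_t}>s)\,\ud s$ in place of your $\int_0^{R/2}$.
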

%%%%%%%%%%%%%%%%%%%%%%
In Proposition \ref{H_lower_bound} we also prove a similar lower bound for a class of isotropic L\'{e}vy processes with characteristic exponent satisfying the so-called upper scaling condition, see \cite{BGR}.
Let us recall that a L\'{e}vy process $\mathbf{X}$ is isotropic if the measure 
$p_t(\ud x)$ is radial (rotationally invariant) for each $t > 0$, equivalently to the matrix $A=\eta I$ for some $\eta\geq0$, the L\'{e}vy measure $\nu$ is rotationally invariant and $\gamma =0$.

In the next theorem we present the asymptotic behaviour of the heat content under the assumption that the L\'{e}vy process $\mathbf{X}$ is isotropic and its characteristic exponent is a regularly varying function at infinity with index greater than one.
We say that a function $f(r)$ is regularly varying of index $\alpha$ at infinity, denoted by $f\in \RInf$, if for any $\lambda >0$,
\begin{align*}
\lim_{r\to \infty}\frac{f(\lambda r)}{f(r)} = \lambda ^\alpha .
\end{align*}
\begin{theorem}\label{Thm_alpha>1}
Let $\Omega \subset \RR^d$ be an open set of finite measure $|\Omega| $ and finite perimeter $\Per (\Omega)$. If $\mathbf{X}$ is an isotropic L\'{e}vy process in $\RR^d$ with the characteristic exponent $\psi$ such that $\psi \in \RInf$, for some $\alpha \in (1,2]$, then\footnote{Here $\psi ^-=(\psi^*)^-$ is the generalized left inverse of the non-decreasing function $\psi^*(u) = \sup_{s \in [0, u]} \psi(s)$, see Subsection \ref{sec_Levy}.}
\begin{align*}
\lim_{t\to 0} \psiI H(t) =  \pi^{-1}\Gamma(1-1/\alpha) \Per (\Omega) .
\end{align*}
\end{theorem}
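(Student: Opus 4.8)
The plan is to establish the precise constant $\pi^{-1}\Gamma(1-1/\alpha)$ by combining the general upper bound of Theorem~\ref{Thm_d>1} with a matching lower bound, exploiting the one-dimensional structure of the perimeter at small scales. First I would analyze the behaviour of the Pruitt function $h(r)$ as $r\to 0$. Under isotropy and the assumption $\psi\in\RInf$ with $\alpha\in(1,2]$, the characteristic exponent, the Pruitt function $h(r)$, and the inverse $\psi^{-}(1/t)$ are all intimately related by standard Tauberian/regular-variation arguments (see Subsection~\ref{sec_Levy}), namely $h(r)\asymp \psi(1/r)$ and $h^{-1}(1/t)\asymp 1/\psi^{-}(1/t)$. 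Substituting the asymptotic $h(r)\sim c\,r^{-\alpha}$ into the integral $\int_{h^{-1}(1/t)}^{R} h(r)\,\ud r$ and using Karamata's theorem, one computes that the upper bound in Theorem~\ref{Thm_d>1} is of the exact order $\psi^{-}(1/t)^{-1}\Per(\Omega)$, which already identifies the correct rate; the entire problem then reduces to pinning down the sharp constant.

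To obtain the exact constant rather than just the order, I would localize the computation near the boundary $\partial\Omega$. The heuristic is that for small $t$ only the behaviour of $\Omega$ in a thin neighbourhood of $\partial\Omega$ matters, and there $\Omega$ looks locally like a half-space $\{x:\sprod{x}{n}<0\}$ with $n$ the outer normal. The key reduction is therefore a one-dimensional calculation: for the half-space $H$, isotropy of $\mathbf{X}$ forces the heat loss across $\partial H$ to depend only on the projection of $X_t$ onto the normal direction, which is a one-dimensional L\'{e}vy process whose characteristic exponent inherits the regular variation of index $\alpha$. Concretely, I would write
\begin{align*}
H(t) = \int_{\Omega}\PP_0(x+X_t\in\Omega^c)\,\ud x = \int_{\Omega}\PP_0(X_t\in\Omega^c-x)\,\ud x,
\end{align*}
and after a coarea-type slicing of $\Omega$ along directions, reduce the inner integral to the one-dimensional overshoot probability $\PP_0(\langle X_t,n\rangle > s)$ integrated against the perimeter measure on $\partial\Omega$.

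The main computation is then the one-dimensional constant. For a one-dimensional isotropic (symmetric) L\'{e}vy process with exponent regularly varying of index $\alpha$, one shows
\begin{align*}
\lim_{t\to 0}\psi^{-}(1/t)\int_{0}^{\infty}\PP_0(\langle X_t,n\rangle > s)\,\ud s = \pi^{-1}\Gamma(1-1/\alpha),
\end{align*}
which is where the gamma factor emerges. I would prove this via the Fourier/characteristic-function representation: writing the tail integral as $\tfrac12\EE_0|\langle X_t,n\rangle|$ and using that $\EE_0 e^{i u \langle X_t,n\rangle}=e^{-t\psi(u)}$ together with the scaling self-similarity provided by regular variation, the limit collapses to an explicit integral $\frac{1}{\pi}\int_0^\infty u^{-2}(1-e^{-u^\alpha})\,\ud u = \pi^{-1}\Gamma(1-1/\alpha)$ after evaluation. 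The condition $\alpha>1$ is precisely what guarantees the finiteness of $\EE_0|\langle X_t,n\rangle|$ and hence the convergence of this integral at infinity.

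The hard part will be justifying the localization and the uniform passage to the limit: one must show rigorously that contributions away from $\partial\Omega$ are negligible, that the half-space approximation is valid uniformly along $\partial\Omega$ in the appropriate measure-theoretic (finite-perimeter) sense, and that one may interchange the limit $t\to0$ with the integration over the boundary. Here the finite-perimeter hypothesis and the De~Giorgi structure of the reduced boundary, together with the dominating upper bound already furnished by Theorem~\ref{Thm_d>1}, should let a dominated-convergence argument close the gap. The cleanest route is likely to sandwich $H(t)$ between the half-space heat content and a controlled error, using Theorem~\ref{Thm_d>1} applied to the symmetric difference of $\Omega$ with its local half-space models to bound the error term by $o(\psi^{-}(1/t)^{-1})$.
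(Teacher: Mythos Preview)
Your strategy---boundary localization, half-space approximation, and a one-dimensional Fourier computation of the constant---is in the spirit of the Brownian heat-content literature, and the one-dimensional calculation you sketch does produce the correct $\pi^{-1}\Gamma(1-1/\alpha)$. However, the paper takes a genuinely different and considerably cleaner route that bypasses boundary localization entirely. The paper works directly with the covariance function $g_\Omega(y)=|\Omega\cap(\Omega+y)|$ and the identity $H(t)=\int_{\RR^d}(g_\Omega(0)-g_\Omega(y))\,p_t(\ud y)$. After splitting off a far-field piece that is $O(t)$ via Pruitt's bound, the near part is written in polar coordinates and rescaled by $s=r\,\psi^-(1/t)$; a local-limit lemma shows that $(\psi^-(1/t))^{-d}p_t(s e_d/\psi^-(1/t))\to p_1^{(\alpha)}(se_d)$, the isotropic $\alpha$-stable density, while Galerne's results on the covariogram give $\int_{\mathbb{S}^{d-1}}(g_\Omega(0)-g_\Omega(ru))/r\,\sigma(\ud u)\to \tfrac{\pi^{(d-1)/2}}{\Gamma((d+1)/2)}\Per(\Omega)$. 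Dominated convergence, Potter bounds to control the tail in $s$, and the known value of $\int_0^\infty s^d p_1^{(\alpha)}(se_d)\,\ud s$ finish the proof.

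What the paper's approach buys is that all the delicate geometric measure theory (reduced boundary, blow-ups, half-space models) is already packaged inside the Lipschitz and directional-derivative properties of $g_\Omega$; no explicit localization near $\partial\Omega$ is needed, and the argument works verbatim for arbitrary sets of finite perimeter. Your route would work in principle, but the step you flag as ``the hard part'' is genuinely hard here: applying Theorem~\ref{Thm_d>1} to the symmetric difference of $\Omega$ with local half-space models is problematic because half-spaces have infinite measure and their truncations need not have uniformly controlled perimeter, so the error-bounding scheme you outline does not go through as stated. Also, note that $h(r)\sim c\,r^{-\alpha}$ is not quite right---only $h(r)=r^{-\alpha}\ell(1/r)$ for some slowly varying $\ell$---though this is harmless for identifying the order.
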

%%%%%%%%%%%%%%%%%%%%%%%%
%%%% Theorem fo X of bounded variation

The following theorem deals with L\'{e}vy processes with finite variation. Recall that according to \cite[Theorem 21.9]{Sato} a L\'{e}vy process $\mathbf{X}$ has finite variation on any interval $(0,t)$ if and only if 
\begin{align}\label{Levy_bdd_var_cond}
A=0\quad \mathrm{and}\quad \int_{\norm{x}\leq 1}\norm{x}\nu (\ud x)<\infty .
\end{align}
In this case the characteristic exponent has the following simple form
\begin{align*}
\psi (x) = i\sprod{x}{\gamma _0} + \int_{\RR^d}\left( 1-e^{i\sprod{x}{y}}\right)\nu (\ud y),
\end{align*}
where 
\begin{align}\label{gamma_0}
\gamma _0 = \int_{\norm{y}\leq 1}y\, \nu (\ud y) - \gamma .
\end{align}
We notice that for symmetric L\'{e}vy processes with finite variation we have $\int_{\norm{y}\leq 1}y\, \nu (\ud y) =0$. Thus, for any symmetric L\'{e}vy process with finite variation we have $\gamma _0 = 0$. 
Moreover, for such processes the related function $h$ defined at \eqref{Pruitt_Function} is Lebesgue integrable on every bounded interval. 
As we mentioned before, in front of Lemma \ref{Lemma_Per_X} the quantity $\Per_{\mathbf{X}}(\Omega)$ is finite in the following theorem. For the definition of a directional derivative we refer the reader to Subsection \ref{sec_geom}.
\begin{theorem}\label{Thm_X_bdd_variation}
Let $\mathbf{X}$ be a L\'{e}vy process in $\RR^d$ with finite variation. Let $\Omega \subset \RR^d$ be an open set of finite measure $|\Omega| $ and finite perimeter $\Per (\Omega)$.  Then
\begin{align*}
\lim_{t\to 0}t^{-1}H(t) = \Per_{\mathbf{X}}(\Omega) + \frac{\norm{\gamma_0}}{2} V_{\frac{\gamma_0}{\norm{\gamma_0}}}(\Omega) \IndFun{\RR^d\setminus \{0\}}(\gamma _0),
\end{align*}
where $V_u(\Omega)$ is the directional derivative of the indicator function $\IndFun{\Omega}$ in the direction $u$ on the unit sphere in $\RR^d$.
\end{theorem}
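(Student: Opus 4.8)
The plan is to rewrite the heat content as the expectation of a purely geometric functional of the position of the process and then to use the finite-variation structure to separate the drift from the jumps. First I would apply Fubini to get $H(t)=\int_{\RR^d}g(y)\,p_t(\ud y)=\EE\, g(X_t)$, where
\[
g(y)=\int_\Omega\IndFun{\Omega^c}(x+y)\,\ud x=|(\Omega+y)\setminus\Omega|=\tfrac12\norm{\IndFun{\Omega}(\cdot-y)-\IndFun{\Omega}}_{L^1}
\]
is (half of) the $L^1$-modulus of continuity of $\IndFun{\Omega}$. Since $\IndFun{\Omega}$ has bounded variation, $g$ is symmetric, bounded by $|\Omega|$, and globally Lipschitz with $|g(y_1)-g(y_2)|\le\tfrac12\Per(\Omega)\norm{y_1-y_2}$; moreover — and here I would invoke the directional-derivative material of Subsection~\ref{sec_geom} — it satisfies $\lim_{s\to0^+}s^{-1}g(su)=\tfrac12 V_u(\Omega)$ for every unit vector $u$. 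Together with $\int g\,\ud\nu=\Per_{\mathbf{X}}(\Omega)<\infty$, which follows from Fubini and Lemma~\ref{Lemma_Per_X}, these are the only properties of $\Omega$ that enter the argument.

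Next I would split the jumps at a level $\delta>0$ and write $X_t=X_t^{(\delta)}+X_t^{[\delta]}$ as independent processes, where $X_t^{[\delta]}$ is the compound Poisson process collecting the jumps of size larger than $\delta$, with intensity $\lambda_\delta=\nu(\norm{y}>\delta)$, and $X_t^{(\delta)}=-\gamma_0 t+J_t^{(\delta)}$ carries the drift together with the jumps of size at most $\delta$. Conditioning on the number of large jumps gives
\[
\EE\, g(X_t)=e^{-\lambda_\delta t}\,\EE\, g\bigl(X_t^{(\delta)}\bigr)+t\,e^{-\lambda_\delta t}\int_{\norm{y}>\delta}\EE\, g\bigl(X_t^{(\delta)}+y\bigr)\,\nu(\ud y)+O(t^2),
\]
where the $O(t^2)$ bounds the event of two or more large jumps using $g\le|\Omega|$. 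For the one-large-jump term, $X_t^{(\delta)}\to0$ in $L^1$ as $t\to0$, so by continuity and boundedness of $g$ together with dominated convergence its $t^{-1}$-multiple tends to $\int_{\norm{y}>\delta}g\,\ud\nu$.

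The heart of the matter is the no-large-jump term $t^{-1}\EE\, g(X_t^{(\delta)})$, and this is where the finite-variation hypothesis is essential. By the compensation formula, $\EE\norm{J_t^{(\delta)}}\le t\int_{\norm{y}\le\delta}\norm{y}\,\nu(\ud y)=:t\rho_\delta$, and $\rho_\delta\to0$ as $\delta\to0$ precisely because $\int_{\norm{y}\le1}\norm{y}\,\nu(\ud y)<\infty$. Combining this with the Lipschitz bound on $g$ yields $|\EE\, g(X_t^{(\delta)})-g(-\gamma_0 t)|\le\tfrac12\Per(\Omega)\,t\rho_\delta$, so that, using $g(-\gamma_0 t)=g(\gamma_0 t)$ and the directional-derivative limit,
\[
\limsup_{t\to0}\Bigl|t^{-1}\EE\, g(X_t^{(\delta)})-\tfrac{\norm{\gamma_0}}{2}V_{\gamma_0/\norm{\gamma_0}}(\Omega)\,\IndFun{\RR^d\setminus\{0\}}(\gamma_0)\Bigr|\le\tfrac12\Per(\Omega)\,\rho_\delta.
\]
Assembling the three terms, for every fixed $\delta>0$ the $\limsup$ and $\liminf$ as $t\to0$ of $t^{-1}H(t)$ differ from $\tfrac{\norm{\gamma_0}}{2}V_{\gamma_0/\norm{\gamma_0}}(\Omega)\IndFun{\RR^d\setminus\{0\}}(\gamma_0)+\int_{\norm{y}>\delta}g\,\ud\nu$ by at most a constant multiple of $\rho_\delta$. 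Letting $\delta\to0$, monotone convergence gives $\int_{\norm{y}>\delta}g\,\ud\nu\to\Per_{\mathbf{X}}(\Omega)$ while $\rho_\delta\to0$, so the two bounds collapse to the claimed limit.

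I expect the main obstacle to lie in the no-large-jump term, and specifically in resisting the naive central-limit intuition: the small-jump part $J_t^{(\delta)}$ has finite variance $\sim t$, which would suggest fluctuations of order $\sqrt t$ and hence a divergent contribution $t^{-1}\EE\, g(X_t^{(\delta)})\sim t^{-1/2}$. What saves the argument is that short-time fluctuations of a Lévy process are governed by its jumps rather than by a Gaussian scaling, and the correct order is captured by the first-moment bound $\EE\norm{J_t^{(\delta)}}\le t\rho_\delta$ with $\rho_\delta\to0$. This single estimate is what simultaneously lets the deterministic drift $-\gamma_0 t$ survive at the exact order $t$ — producing the directional-derivative term — while the small jumps are harmlessly absorbed into $\Per_{\mathbf{X}}(\Omega)$ as $\delta\to0$. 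The remaining care is purely bookkeeping: justifying the interchange of the $t\to0$ and $\delta\to0$ limits via the uniform-in-$t$ error bound $\tfrac12\Per(\Omega)\rho_\delta$, and invoking the existence of $\lim_{s\to0^+}s^{-1}g(su)$ from the bounded-variation theory of Subsection~\ref{sec_geom}.
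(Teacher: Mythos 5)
Your proof is correct, and it takes a genuinely different route from the paper's. The paper argues via the semigroup: in the case $\gamma_0=0$ it writes $t^{-1}H(t)=t^{-1}(\g(0)-P_t\g(0))$ and invokes Lemma~\ref{Lemma_2} (a mollification plus closedness-of-the-generator argument showing that Lipschitz functions vanishing at infinity lie in $\mathrm{Dom}(\mathcal{L}^0)$) to identify the limit as $-\mathcal{L}^0\g(0)=\Per_{\mathbf{X}}(\Omega)$; in the case $\gamma_0\neq 0$ it peels off the drift, reduces to the previous case, and controls the correction term $t^{-1}\int(\g(y)-\g(y+t\gamma_0))\,p^0_t(\ud y)$ by splitting at $\norm{y}\leq \epsilon t$ and using Pruitt's estimate \eqref{estimate_Pruitt} together with the function $h^0$ of \eqref{Pruitt_Function}. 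You never touch the generator: you decompose $X_t$ into drift, small jumps, and an independent compound Poisson part, condition on the number of large jumps, and control the small-jump contribution by the Campbell first-moment bound $\EE\norm{J_t^{(\delta)}}\leq t\rho_\delta$ with $\rho_\delta\to 0$ --- which is precisely where the finite-variation hypothesis \eqref{Levy_bdd_var_cond} enters --- before interchanging the limits $t\to 0$ and $\delta\to 0$ through the uniform error $\tfrac12\Per(\Omega)\rho_\delta$. Both arguments rest on the same geometric input, namely Proposition~\ref{g_properties}(iv) and \eqref{g_lip_limit} together with $\int (\g(0)-\g(y))\,\nu(\ud y)=\Per_{\mathbf{X}}(\Omega)<\infty$ from Lemma~\ref{Lemma_Per_X}, but the probabilistic engines are different. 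Your route buys a unified treatment of $\gamma_0=0$ and $\gamma_0\neq 0$, avoids Lemma~\ref{Lemma_2} and Pruitt's estimate altogether, and makes transparent why finite variation is the right hypothesis; the paper's route is shorter given the machinery of Section~\ref{sec_Prelim} and exhibits the limit directly as $-\mathcal{L}\g(0)$, in keeping with the heat-semigroup interpretation of $H(t)$. (A cosmetic point: with the paper's convention \eqref{gamma_0} the drift is $-\gamma_0 t$, as you have it, whereas the paper writes $X_t=X^0_t+t\gamma_0$; since $\g$ is symmetric this sign is immaterial, and your explicit use of $g(-\gamma_0 t)=g(\gamma_0 t)$ already absorbs it.)
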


The rest of the paper is organized as follows. We start with a paragraph which gives the list of examples. In Section \ref{sec_Prelim} we present all the necessary facts and tools that we use in the proofs. Section \ref{sec_Proofs} is devoted to the proofs of the above theorems.

\subsubsection*{Notation}
We write $a\wedge b$ for $\min \{a,b\}$ and $a\vee b=\max\{a,b\}$. Positive constants are denoted by $C_1, C_2$ etc. If additionally $C$ depends on some $M$, we write $C=C(M)$. 
We use the notation $f(x) = O(g(x))$ if there is a constant $C>0$ such that $f(x)\leq C g(x)$; $f(x)\asymp g(x)$ if $f(x)=O(g(x))$ and $g(x)=O(f(x))$; $f(x)\sim g(x)$ at $x_0$ if $\lim_{x\to x_0}f(x)/g(x)=1$.
$\mathbb{S}^{d-1}$ stands for the unit sphere in $\RR^d$ and $\sigma (\ud u) = \sigma ^{d-1}(\ud u)$ is the surface measure.
\vspace*{0,2cm}

\subsection{Examples}
First we consider the isotropic (rotationally invariant) $\alpha$-stable process in $\RR^d$. The  following example shows that our theorems can be regarded as extensions of the results contained in papers \cite{Valverde1} and \cite{Valverde2}. 
\begin{example}
Let $S^{(\alpha)} = (S_t^{(\alpha)})_{t\geq 0}$ be the isotropic $\alpha$-stable process in $\RR^d$ with $\alpha \in (0,2)$.  We recall that the characteristic exponent of $S^{(\alpha )}$ is $x\mapsto c\norm{x}^{\alpha}$, for some $c>0$, see \cite[Theorem 14.14]{Sato}. The L\'{e}vy measure $\nu$ of $S^{(\alpha)}$ has the form
\begin{align*}
\nu (\ud x) = \frac{c_1\, \ud x}{\norm{x}^{d+\alpha}},\quad \mathrm{for\ some}\ c_1>0.
\end{align*}
The related function $h$ defined in \eqref{Pruitt_Function} turns into $h(r) = c_2/r^{\alpha}$, for some $c_2>0$. Let $\Omega \subset \RR^d$ be an open set of finite measure $|\Omega| $ and finite perimeter $\Per (\Omega)$ and let $R=2|\Omega|/\Per(\Omega)$. Then, by Theorem \ref{Thm_d>1}, for any $\alpha \in (0,2)$, 
\begin{align*}
H(t)\leq C_1 \Per(\Omega)\, t\int_{\frac{R}{2} \wedge t^{1/\alpha}}^{R}r^{-\alpha}\, \ud r ,\ \mathrm{for\ all}\ t>0,
\end{align*}
and, by Proposition \ref{H_lower_bound}, for $\alpha\in[1,2)$ and $t$ small enough,
\begin{align*}
H(t)\geq C_2\Per(\Omega)\, t\int_{t^{1/\alpha}}^{R}r^{-\alpha}\, \ud r .
\end{align*}
In particular, for $\alpha = 1$ we get
\begin{align*}
\limsup_{t\to 0}\frac{H(t)}{t\log (1/t)}\leq C_1\Per(\Omega)\quad \mathrm{and}\quad \liminf_{t\to 0}\frac{H(t)}{t\log(1/t)}\geq C_2 \Per(\Omega).
\end{align*}
For $\alpha \in (1,2)$, by Theorem \ref{Thm_alpha>1},
\begin{align*}
\lim_{t\to 0} t^{-1/\alpha}H(t) = \pi^{-1}\Gamma(1-1/\alpha) \Per (\Omega)
\end{align*}
and for $\alpha \in (0,1)$, by Theorem \ref{Thm_X_bdd_variation},
\begin{align*}
\lim_{t\to 0}t^{-1}H(t) = \Per_{S^{(\alpha)}}(\Omega).
\end{align*}
Here $\gamma _0 = 0$ according to the comments following equation \eqref{gamma_0}.
\end{example}
%%%%%%%%%%%%%%%%%%%%
\begin{example}
Let $\mathbf{X}$ be a pure jump (i.e. $A=0$ and $\gamma=0$ in \eqref{charact_expo}) isotropic L\'{e}vy process in $\RR^d$ such that its L\'{e}vy measure $\nu$ has  the form
\begin{align}\label{Levy_meas_reg}
\nu (dx ) = ||x||^{-d}g(1/||x||)dx,\quad \mathrm{for\ some}\ g \in \mathcal{R}_{\alpha},\ \alpha \in (0,2).
\end{align}
By \cite[Proposition 5.1]{CGT}, we conclude that $\psi \in \RInf$. Hence for such processes, for $1<\alpha <2$,
\begin{align*}
\lim_{t\to 0} \psiI H(t) =  \pi^{-1}\Gamma(1-1/\alpha) \Per (\Omega) ,
\end{align*}
and, for $0<\alpha <1$,
\begin{align*}
\lim_{t\to 0}t^{-1}H(t) = \Per_{\mathbf{X}}(\Omega).% + \norm{\gamma_0}%V_{\frac{\gamma_0}{\norm{\gamma_0}}}(\Omega)/2.
\end{align*}
Typical examples of isotropic L\'{e}vy processes satisfying \eqref{Levy_meas_reg} are
\begin{enumerate}
		\item truncated stable process: $g(r)=r^{-\alpha} \IndFun{(0, 1)}(r)$;
		\item tempered stable process: $g(r)=r^{-\alpha} e^{-r}$;
		\item isotropic Lamperti stable process: $g(r)=re^{\delta r}(e^r-1)^{-\alpha-1}$, $\delta<\alpha+1$;
		\item layered stable process: $g(r)=r^{-\alpha} \IndFun{(0,1)(r)} + r^{-\alpha_1} \IndFun{[1, \infty)}(r),\ \alpha_1\in(0,2)$.
	\end{enumerate}
\end{example}
%%%%%%%%%%%%%%
\begin{example}
Let $\mathbf{X}$ be a L\'{e}vy process which is the independent sum of the Brownian motion and the isotropic $\alpha$-stable process in $\RR^d$. Then $\mathbf{X}$ is isotropic and its characteristic exponent is
 $\psi (x) = \eta ||x||^2+c||x||^\alpha$, for some $\eta ,c >0$ and $\alpha \in (0,2)$. Clearly we have $\psi \in \mathcal{R}_2$ and whence, by Theorem \ref{Thm_alpha>1},
 \begin{align*}
\lim_{t\to 0} t^{-1/2} H(t) =  \sqrt{\frac{\eta}{\pi}} \Per (\Omega) . 
 \end{align*}
\end{example}
%%%%%%%%%%%%%%%55
\begin{example}
Take $\alpha \in (0,2)$ and let $\mathbf{X}$ be a symmetric L\'{e}vy process in $\RR$ which is the independent sum of the isotropic $\alpha $-stable process and a L\'{e}vy process of which the L\'{e}vy measure $\nu$ has the form
\begin{align}\label{leevy_m}
\nu(\ud x)=\sum_{k=1}^{\infty} 2^{k\alpha /2}\left(\delta_{2^{-k}}( \ud x)+\delta_{-2^{-k}}(\ud x)\right),
\end{align}
where $\delta _x$ stands for the Diraac measure at $x$.
According to Subsection \ref{sec_Levy}, the characteristic exponent $f(x)$ of the process related to $\nu(\ud x)$ has the form
\begin{align*}
f(x) = 2\int_0^\infty \left(1-\cos(xu)\right)\nu (\ud u).
\end{align*}
The characteristic exponent of the isotropic $\alpha$-stable process is $x\mapsto c|x|^\alpha$, for some $c>0$, see \cite[Theorem 14.14]{Sato}, and whence, by independence,
the characteristic exponent of $\mathbf{X}$ equals to
\begin{align*}
\psi (x) = c|x|^\alpha + f(x),\quad c>0.
\end{align*}
Since $1-\cos (v)\asymp v^2$, for $0<v<1$, we have for $x>0$,
\begin{align}\label{f_estimate}
 C x^2\int^{1/x}_0 u^2\nu(\ud u) \leq f(x)\leq 4\nu \left((1/x,\infty)\right) + x^2\int^{1/x}_0 u^2\nu(\ud u),
\end{align}
where $\nu \left((1/x,\infty)\right)$ is the $\nu$-measure of the half-line $(1/x,\infty )$.
Using formula \eqref{leevy_m} we obtain that for $x\geq 1$,
\begin{align*}
\int^{1/x}_0 u^2\nu(\ud u) = \sum_{k\geq \log_2 x}\!\! 2^{(\alpha /2-2)k} \asymp 2^{(\alpha /2-2)\log_2 x} = x^{\alpha /2 -2}.
\end{align*}
%where $\log _2x$ is the logarithm to the base $2$.
Similarly we have, for $x> 2$,
\begin{align*}
\nu \left( (1/x,\infty) \right) = \sum _{1\leq k<\log_2 x}2^{\alpha k/2}\leq \sum _{1\leq k\leq [\log_2 x]}2^{\alpha k/2}
=\frac{1-2^{\alpha ([\log_2x ]+1)/2}}{1-2^{\alpha /2}}
\asymp x^{\alpha /2},
\end{align*}
where $[x]$ stands for the integer part of $x$. Hence, by \eqref{f_estimate}, $f(x)\asymp |x|^{\alpha /2}$, for $|x|> 2$. 
We obtain that $\psi (x) \sim c|x|^\alpha$ at infinity and thus, for $\alpha >1$,
\begin{align*}
\lim_{t\to 0}t^{-1/\alpha}H(t) = c^{1/\alpha} \pi^{-1}\Gamma(1-1/\alpha) \Per (\Omega) 
\end{align*}
and, for $\alpha <1$,
\begin{align*}
\lim_{t\to 0}t^{-1}H(t) = \Per_{\mathbf{X}}(\Omega).% + \norm{\gamma_0}%V_{\frac{\gamma_0}{\norm{\gamma_0}}}(\Omega)/2.
\end{align*}
\end{example}

The next example shows that in the case when $\mathbf{X}$ is not isotropic then the constant in Theorem \ref{Thm_alpha>1} may depend on the process.
\begin{example}
For $\alpha>1$ and $\ell \in \mathcal{R}_0$
we consider a L\'{e}vy process $\mathbf{X}$ in $\RR$ with the L\'{e}vy measure $\nu$ of the form
\begin{align*}
\nu(\ud x)=\left( c_1f(1/x)x^{-1}\IndFun{\{x>0\}} + c_2f(1/|x|)|x|^{-1}\IndFun{\{x<0\}}\right)\ud x,
\end{align*}
where
$f(r)=r^\alpha\ell(r)$, for $r\geq 1$ and $f(r)=r^\alpha$ for  $r<1$ and
for some constants $c_1,c_2\geq  0$ such that $c_1+c_2>0$. The corresponding characteristic exponent we call $\psi$.

Let $S$ be the non-symmetric $\alpha$-stable distribution in $\RR$ with the L\'{e}vy measure given by 
$\left( c_1x^{-1-\alpha}\IndFun{\{x>0\}} + c_2|x|^{-1-\alpha}\IndFun{\{x<0\}}\right)\ud x$ and with the characteristic exponent $\psi ^{(\alpha)}$.

We observe that $f^{-1}(1/t)X_t$ converges in law to $S$. Indeed, it is enough to prove the convergence of characteristic functions and this holds since we easily get that for any $x$,
\begin{align*}
\lim_{t\to 0}t\psi(xf^{-1}(1/t))=\psi^{(\alpha)}(x).
\end{align*}
For $\Omega=(a,b)$ we have 
\begin{align*}
H(t) = \int_a^b \PP (X_t\leq a-x)\, \ud x + \int_a^b \PP (X_t\geq b-x)\, \ud x.
\end{align*}
A suitable change of variable in both integrals yields
\begin{align*}
H(t) = \int_0^{R} \PP (|X_t|\geq x)\, \ud x .
\end{align*}
Hence,
\begin{align*}
\lim_{t\to 0}f^{-1}(1/t)H(t)=\lim_{t\to 0} \int^{(b-a)f^{-1}(1/t)}_0\!\!\!\! \PP(f^{-1}(1/t)|X_t|>u)\ud u = \int^\infty_0\PP(|S|>u)\ud u =\EE|S|.
\end{align*}
\end{example}

%%%%%%%%%%%%%%%%%%%%%%%%%%%%%
\section{Preliminaries}\label{sec_Prelim}
In this section we collect all the necessary objects and facts that we use in the course of our study. We start with the short presentation of the geometrical tools.
\subsection{Geometrical issues}\label{sec_geom}
We refer the reader to \cite{Ambrosio_2000} for a detailed discussion on functions of bounded variation and related topics.

Let $G\subseteq \RR ^d$ be an  open set and $f:G\rightarrow \RR$, $f\in L^{1}(G)$. The total variation of $f$ in $G$ is defined by
\begin{align*}
V(f,G)=\sup \left\{ \int_G \,f(x) \mathrm{div} \varphi(x)\, \ud x: \varphi\in C^{1}_{c}(G,\RR ^d), \norm{\varphi}_{\infty}\leq 1\right\}.
\end{align*}
The directional derivative of $f$ in $G$ in the direction $u\in \mathbb{S}^{d-1}$ is
$$V_{u}(f,G)=\sup \left\{ \int_{G}\,f(x) \sprod{\nabla \varphi(x)}{u}\, \ud x: \varphi\in C^{1}_{c}(G,\RR ^d), \norm{\varphi}_{\infty}\leq1\right\}.$$
We notice that $V(\IndFun{\Omega}, \RR ^d)$ is the perimeter $ \Per(\Omega)$ of the set $\Omega$, cf. \eqref{Perimeter_def}. Let $V_{u}(\Omega)$ stand for the quantity $V_u(\IndFun{\Omega}, \RR ^d)$.
We mention that, by \cite[Proposition 3.62]{Ambrosio_2000}, for any open $\Omega$ with Lipschitz boundary $\partial \Omega$ and finite Hausdorff measure $\sigma (\partial \Omega)$ we have 
\begin{align*}
\Per(\Omega) = \sigma (\partial \Omega).
\end{align*}

For any $\Omega \subset\RR^d$ with finite Lebesgue measure $|\Omega|$ we define the covariance function $g_\Omega$ of $\Omega$ as follows
\begin{align}\label{g_omega_defn}
g_\Omega (y)=|\Omega\cap (\Omega + y)|=\int_{\RR ^d}\,\IndFun{\Omega}(x)\,\IndFun{\Omega}(x-y) \ud x,\quad y\in \RR^d.
\end{align}
The next proposition collects all the necessary facts concerning the covariance function following the presentation of \cite{Galerne}. This also reveals the link between directional derivatives and covariance functions.
%%%%%%%%%%%%%%%%%%%%%%
\begin{proposition}{\cite[Proposition 2, Theorem 13 and Theorem 14]{Galerne}}\label{g_properties}												Let $\Omega \subset \RR ^d$ have finite measure. Then
\begin{enumerate}
\item[(i)] 
For all $y\in \RR ^d$, $0\leq g_\Omega(y)\leq g_\Omega(0)=|\Omega|$.
\item[(ii)] 
For all $y\in \RR ^d$, $ g_\Omega(y)= g_\Omega(-y)$.
%\item[(iv)] 
%$g_\Omega$ is compactly supported; for $\norm{y} \geq \mathrm{diam}(\Omega)$ we have $g_\Omega(y)=0$.
\item[(iii)] 
$g_\Omega$ is uniformly continuous in $\RR ^d$ and $\lim_{y\to \infty}g_\Omega (y)=0$.
\end{enumerate}
Moreover, if $\Omega$ is of finite perimeter $\Per(\Omega)<\infty$ then
\begin{enumerate}
\item[(iv)]
the function $g_\Omega$ is Lipschitz,
\begin{align*}
2\norm{g_\Omega}_{\mathrm{Lip}} = \sup_{u\in \mathbb{S}^{d-1}}V_u(\Omega)\leq \Per(\Omega)
\end{align*}
and
\begin{align}\label{g_lip_limit}
\lim_{r\to 0}\frac{\g(0) - \g(ru)}{|r|} = \frac{V_u(\Omega)}{2}.
\end{align}
\item[(v)] 
For all $r>0$ the limit $\lim_{r\to 0^+}\frac{\g(0)-\g(ru) }{r}$ exists, is finite and
\begin{align*}
\Per(\Omega) =  \frac{\Gamma((d+1)/2)}{\pi^{(d-1)/2}} \int_{\mathbb{S}^{d-1}}\lim_{r\to 0^+}\frac{ \g(0)- \g(ru)}{r} \sigma (\ud u).
\end{align*}
\end{enumerate}
In particular, (i) and the fact that $\g$ is Lipschitz imply that there is a constant $C=C(\Omega)>0$ such that
\begin{align}
0\leq g_\Omega (0)- g_\Omega (y) \leq C (1\wedge \norm{y}).\label{g_Omega_bound}
\end{align}
\end{proposition}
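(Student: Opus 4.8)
The plan is to dispatch the elementary properties (i)--(iii) through the convolution representation $\g=\IndFun{\Omega}*\widetilde{\IndFun{\Omega}}$, where $\widetilde{f}(x)=f(-x)$, and then to reduce the two quantitative statements (iv) and (v) to one-dimensional slicing combined with the structure theory of sets of finite perimeter. Property (i) is immediate from $0\le\IndFun{\Omega}\le1$ together with $\g(0)=\int_{\RR^d}\IndFun{\Omega}^2=|\Omega|$, and (ii) follows from the translation invariance of Lebesgue measure. For (iii), since $|\Omega|<\infty$ we have $\IndFun{\Omega}\in L^1\cap L^2$, so, writing $(\tau_h f)(x)=f(x-h)$, the Cauchy--Schwarz bound $|\g(y)-\g(y')|\le\norm{\IndFun{\Omega}}_2\,\norm{\tau_{y-y'}\IndFun{\Omega}-\IndFun{\Omega}}_2$ and continuity of translation in $L^2$ give uniform continuity; the decay $\g(y)\to0$ as $\norm{y}\to\infty$ I would get by truncating to $\Omega_R=\Omega\cap B(0,R)$, observing that $\Omega_R$ and $\Omega_R+y$ are disjoint once $\norm{y}>2R$, so that $\g(y)$ is bounded by a fixed multiple of $|\Omega\setminus\Omega_R|$, which tends to $0$ as $R\to\infty$.

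The core is the Lipschitz statement (iv). Writing $h=ru$ and using that $\IndFun{\Omega}(w-ru)-\IndFun{\Omega}(w)$ equals $+1$ on $(\Omega+ru)\setminus\Omega$ and $-1$ on $\Omega\setminus(\Omega+ru)$, I would record the representation
\begin{align*}
\g(y+ru)-\g(y)=\int_{(\Omega+ru)\setminus\Omega}\IndFun{\Omega}(w+y)\,\ud w-\int_{\Omega\setminus(\Omega+ru)}\IndFun{\Omega}(w+y)\,\ud w.
\end{align*}
Both integrands lie in $[0,1]$ and both domains have measure $\g(0)-\g(ru)$, so that $|\g(y+ru)-\g(y)|\le\g(0)-\g(ru)$ for every $y$. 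I would next check that $\phi_u(r):=\g(0)-\g(ru)=|\Omega\setminus(\Omega+ru)|$ is subadditive in $r>0$, from the inclusion $\Omega\setminus(\Omega+(r+s)u)\subseteq(\Omega\setminus(\Omega+ru))\cup((\Omega\setminus(\Omega+su))+ru)$. Subadditivity gives $\phi_u(r)/r\le\phi_u(r/n)/(r/n)$ for all $n$, hence, once the limit $\lim_{r\to0^+}\phi_u(r)/r=V_u(\Omega)/2$ is known, the uniform bound $\phi_u(r)\le\tfrac{r}{2}V_u(\Omega)$ for all $r>0$. Combined with the displayed inequality this yields $\norm{\g}_{\mathrm{Lip}}\le\tfrac12\sup_uV_u(\Omega)$; the reverse inequality $\norm{\g}_{\mathrm{Lip}}\ge\sup_u\lim_{r\to0}|\g(ru)-\g(0)|/|r|=\tfrac12\sup_uV_u(\Omega)$ is a consequence of \eqref{g_lip_limit}, and $\sup_uV_u(\Omega)\le\Per(\Omega)$ follows from the identity $V_u(\Omega)=\int_{\partial^*\Omega}|\sprod{u}{\nu_\Omega}|\,\ud\mathcal{H}^{d-1}$ and $|\sprod{u}{\nu_\Omega}|\le1$.

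The heart of the matter, and the place where finite perimeter is genuinely used, is the directional limit \eqref{g_lip_limit}. Fixing $u$ and decomposing $\RR^d=u^\perp\oplus\RR u$, Fubini gives $\phi_u(r)=\int_{u^\perp}|\Omega_z\setminus(\Omega_z+r)|\,\ud z$ in terms of the one-dimensional sections $\Omega_z\subset\RR$. For a section of finite perimeter I would use the elementary one-dimensional fact that $|E\setminus(E+r)|/r$ converges, as $r\to0^+$, to the number of up-jumps of $\IndFun{E}$, which equals $\tfrac12\#\partial^*E$; since each per-section quotient is dominated by its limit $\tfrac12\#\partial^*\Omega_z$ (again by one-dimensional subadditivity), dominated convergence and the BV slicing identity $V_u(\Omega)=\int_{u^\perp}\#\partial^*\Omega_z\,\ud z$ give $\lim_{r\to0^+}\phi_u(r)/r=\tfrac12 V_u(\Omega)$. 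As $\phi_u$ is even in $r$, this is simultaneously the two-sided limit in \eqref{g_lip_limit}. This is the step I expect to require the most care, since it hinges on the one-dimensional restriction theory for BV functions and on the slicing theorem.

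Finally, for (v) I would integrate \eqref{g_lip_limit} over the sphere and invoke the De Giorgi structure theorem: by Tonelli,
\begin{align*}
\int_{\mathbb{S}^{d-1}}V_u(\Omega)\,\sigma(\ud u)=\int_{\partial^*\Omega}\Big(\int_{\mathbb{S}^{d-1}}|\sprod{u}{\nu_\Omega}|\,\sigma(\ud u)\Big)\ud\mathcal{H}^{d-1}=c_d\,\Per(\Omega),
\end{align*}
where the inner Cauchy projection integral $c_d=\int_{\mathbb{S}^{d-1}}|\sprod{u}{n}|\,\sigma(\ud u)$ is independent of the unit vector $n$ and equals $2\pi^{(d-1)/2}/\Gamma((d+1)/2)$; substituting $\lim_{r\to0^+}(\g(0)-\g(ru))/r=V_u(\Omega)/2$ then reproduces exactly the normalising constant $\Gamma((d+1)/2)/\pi^{(d-1)/2}$ in (v). The closing estimate \eqref{g_Omega_bound} is then immediate: for $\norm{y}\le1$ the Lipschitz bound gives $\g(0)-\g(y)\le\norm{\g}_{\mathrm{Lip}}\norm{y}$, while for $\norm{y}\ge1$ one uses the trivial bound $\g(0)-\g(y)\le\g(0)=|\Omega|$, so that $0\le\g(0)-\g(y)\le C(1\wedge\norm{y})$ with $C=\max\{\norm{\g}_{\mathrm{Lip}},|\Omega|\}$.
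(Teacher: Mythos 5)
Your proof is correct, but there is an important structural point: the paper does not prove items (i)--(v) at all. The proposition is quoted directly from Galerne \cite[Proposition 2, Theorem 13 and Theorem 14]{Galerne}, and the only piece the authors derive themselves is the closing bound \eqref{g_Omega_bound}, obtained exactly as you obtain it (the Lipschitz estimate for $\norm{y}\leq 1$, the trivial bound $\g(0)-\g(y)\leq \g(0)=|\Omega|$ otherwise, and non-negativity from (i)). So your proposal is not an alternative to an argument in the paper; it is a reconstruction of the argument the paper outsources to its reference, and it is a faithful one: the identity $\g(0)-\g(ru)=|\Omega\setminus(\Omega+ru)|$, the representation of $\g(y+ru)-\g(y)$ as a difference of two integrals over $(\Omega+ru)\setminus\Omega$ and $\Omega\setminus(\Omega+ru)$, the subadditivity of $r\mapsto|\Omega\setminus(\Omega+ru)|$, the one-dimensional slicing computation of the directional limit, and the Cauchy projection constant $\int_{\mathbb{S}^{d-1}}|\sprod{u}{n}|\,\sigma(\ud u)=2\pi^{(d-1)/2}/\Gamma((d+1)/2)$ (which checks out, e.g. it gives $4$ for $d=2$ and $2\pi$ for $d=3$) are precisely the ingredients behind the covariogram results being cited. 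Two remarks on rigor rather than correctness: first, your derivation of the Lipschitz bound in (iv) logically depends on \eqref{g_lip_limit}, which you prove later by slicing; this circularity is only apparent, since the slicing argument for \eqref{g_lip_limit} is independent of (iv), but the write-up should order the steps accordingly. Second, your argument leans on three substantive external facts: the slicing identity $V_u(\Omega)=\int_{u^\perp}\mathcal{H}^0(\partial^*\Omega_z)\,\ud z$, the a.e. description of one-dimensional sets of finite perimeter and finite measure as finite unions of bounded intervals, and the De Giorgi--Federer identity $V_u(\Omega)=\int_{\partial^*\Omega}|\sprod{u}{\nu_\Omega}|\,\ud\mathcal{H}^{d-1}$ together with $\Per(\Omega)=\mathcal{H}^{d-1}(\partial^*\Omega)$. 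These are standard and at the same level as what the cited reference itself uses, so this is a legitimate division of labor, not a gap. The trade-off between the two routes is clear: the paper's citation keeps its preliminaries short, since (i)--(v) are used only as a black box in the proofs of its main theorems, while your version makes the statement self-contained at the price of importing the BV machinery explicitly.
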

%%%%%%%%%%%%%%%%%%%%%%%%%%%%%%%%%%%%%%%%%%%%%%%%%%%%%
\subsection{Regular variation}\label{sec_RV}
A function $\ell : [x_0, +\infty) \rightarrow (0, \infty)$, for some $x_0 > 0$, is called
slowly varying at infinity if for each $\lambda > 0$
\[	
	\lim_{x \to \infty} \frac{\ell(\lambda x)}{\ell(x)} = 1.
\]
We say that $f: [x_0, +\infty) \rightarrow (0, +\infty)$ is regularly varying of index
$\alpha \in \RR$ at infinity, if $f(x) x^{-\alpha}$ is slowly varying at infinity. The set of
regularly varying functions of index $\alpha$ at infinity is denoted by $\RInf$. In particular, if $f \in \RInf$
then
\[
	\lim_{x \to \infty}
	\frac{f(\lambda x)}{f(x)}
	=\lambda^\alpha,\quad \lambda>0.
\] 
The following property, so-called \textit{Potter bounds}, of regularly varying functions will be very useful, see
\cite[Theorem 1.5.6]{bgt}. For every $C > 1$ and $\epsilon > 0$ there is $x_0=x_0(C,\epsilon)>0$ such that for all
$ x, y \geq x_0$
\begin{equation}
	\label{eq:14}
	\frac{f(x)}{f(y)}\leq C \left( (x/y)^{\alpha -\epsilon} \vee (x/y)^{\alpha +\epsilon}\right).
\end{equation}
%%%%%%%%%%%%%%%%%%%%%%%%%%
\subsection{L\'{e}vy processes}\label{sec_Levy}
Throughout the paper $\mathbf{X}$ always denotes a L\'{e}vy process, that is
a c\`{a}dl\`{a}g stochastic process with stationary and independent increments. The characteristic function of $X_t$ has the form $\EE e^{i\sprod{X_t}{\xi}} = e^{-t\psi (\xi)}$, where the characteristic exponent $\psi$ is given by \eqref{charact_expo} with the corresponding L\'{e}vy measure $\nu $, cf. \eqref{Levy_measure}.

We recall that $\mathbf{X}$ is isotropic if the measures $p_t(\ud x)$ are all radial. This is equivalent to the radiality of the L\'{e}vy measure and the characteristic exponent. For isotropic processes the characteristic exponent has the simpler form
\begin{align*}
\psi (x) = \int_{\RR^d}\left( 1- \cos \sprod{x}{y}\right)\nu (\ud x) + \eta \norm{x}^2,
\end{align*}
for some $\eta \geq 0$. 
We usually abuse notation by setting $\psi(r)$ to be equal to $\psi(x)$ for any $x \in \RR^d$
with $\norm{x} = r>0$. Since the function $\psi$ is not necessary monotone, it is more convenient to work with the non-decreasing function $\psi^*$ defined by 
\begin{equation*}
	\psi^*(u) = \sup_{s \in [0, u]} \psi(s),\quad u \geq 0.
\end{equation*}
We denote by $\psi ^-$ the generalized inverse of the function $\psi^*$, that is $\psi^-(u) = \inf \{x\geq 0:\, \psi^*(x)\geq u\}$.
By \cite[Theorem 1.5.3]{bgt}, if $\psi \in \RInf$, for some $\alpha>0$, then $\psi ^* \in \RInf$ and thus $\psi ^- \in \mathcal{R}_{1/\alpha}$, which implies that $\lim_{t\to 0}\psi^-(1/t) = \infty$.

To any L\'{e}vy process $\mathbf{X}$ we associate the function $h$ defined at \eqref{Pruitt_Function}. According to \cite[Formula (3.2)]{Pruitt}, there is some positive constant $C=C(d)$ such that for any $r>0$,
\begin{align}\label{estimate_Pruitt}
\PP \left( \norm{X_t}\geq r \right) \leq \PP \left(\sup_{0\leq s\leq t } \norm{X_s}\geq r \right)\leq C t h(r) .
\end{align}
We mention that the function $h$ is decreasing and satisfies the doubling property 
\begin{align}\label{doubling:h}
h(2x)\geq h(x)/4,\quad x>0.
\end{align}
For a symmetric L\'{e}vy process $\mathbf{X}$ the function $h$ has the simplified form
\begin{align*}
h(r)= \norm{A}r^{-2} + \int_{\RR ^d} \left( 1\wedge \norm{y}^2r^{-2} \right) \, \nu(\ud y)
\end{align*} 
and for these processes, see \cite[Corollary 1]{Grzywny1},
\begin{equation}\label{psi_star_estimate}
	\frac{1}{2} \psi^*(r^{-1}) \leq h(r) \leq 8(1 + 2d) \psi^*(r^{-1}).
\end{equation}

In the paper we also deal with L\'{e}vy processes which have finite variation on any interval $(0,t)$, for $t>0$. It holds if and only if condition \eqref{Levy_bdd_var_cond} is satisfied. 
It turns out that for such processes the quantity $\Per_{\mathbf{X}}(\Omega)$ defined at \eqref{X_perimeter} is finite.
\begin{lemma}\label{Lemma_Per_X}
Assume that $\mathbf{X}$ has finite variation. Then for any $\Omega \subset \RR ^d$ of finite measure and finite perimeter $\Per (\Omega)<\infty$ we have $\Per_{\mathbf{X}}(\Omega)<\infty$.
\end{lemma}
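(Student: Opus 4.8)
The plan is to rewrite $\Per_{\mathbf{X}}(\Omega)$ from \eqref{X_perimeter} as a single integral against $\nu$ and then control the inner factor by the covariance function. First I would unfold the inner integral: since $\Omega^c - x = \{y : x+y \in \Omega^c\}$, Tonelli's theorem (the integrand is nonnegative) lets me interchange the order of integration to obtain
\begin{align*}
\Per_{\mathbf{X}}(\Omega) = \int_{\RR^d}\left(\int_{\RR^d}\IndFun{\Omega}(x)\IndFun{\Omega^c}(x+y)\,\ud x\right)\nu(\ud y).
\end{align*}
The bracketed Lebesgue integral equals $|\Omega| - \int_{\RR^d}\IndFun{\Omega}(x)\IndFun{\Omega}(x+y)\,\ud x$, and by the definition \eqref{g_omega_defn} of the covariance function together with its symmetry (Proposition \ref{g_properties}(ii)) and the normalization $g_\Omega(0)=|\Omega|$, this is exactly $\g(0) - \g(y)$. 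Hence
\begin{align*}
\Per_{\mathbf{X}}(\Omega) = \int_{\RR^d}\left(\g(0) - \g(y)\right)\nu(\ud y).
\end{align*}

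Next I would invoke the hypothesis that $\Omega$ has finite measure and finite perimeter. This is precisely what makes $\g$ Lipschitz, so the estimate \eqref{g_Omega_bound} applies: there is $C = C(\Omega) > 0$ with $0 \leq \g(0) - \g(y) \leq C(1 \wedge \norm{y})$. Plugging this in gives
\begin{align*}
\Per_{\mathbf{X}}(\Omega) \leq C\int_{\RR^d}(1 \wedge \norm{y})\,\nu(\ud y),
\end{align*}
and it remains only to check that the right-hand side is finite. I would split the integral at $\norm{y} = 1$: on $\{\norm{y} > 1\}$ we have $1 \wedge \norm{y} = 1$ and $\int_{\norm{y}>1}\nu(\ud y) < \infty$ because $\nu$ is a L\'evy measure, cf. \eqref{Levy_measure}; on $\{\norm{y} \leq 1\}$ we have $1 \wedge \norm{y} = \norm{y}$ and $\int_{\norm{y}\leq 1}\norm{y}\,\nu(\ud y) < \infty$ by the finite-variation condition \eqref{Levy_bdd_var_cond}. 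Both pieces are finite, so $\Per_{\mathbf{X}}(\Omega) < \infty$.

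There is no serious obstacle here; the argument rests on two identifications that I would want to get right. The first is the reduction to the covariance function, which turns the nested integral into the single scalar quantity $\g(0) - \g(y)$ and lets the geometric hypothesis on $\Omega$ enter solely through \eqref{g_Omega_bound}. The second is the bookkeeping of where the finite-variation assumption is genuinely needed: it is used only on the small-jump part $\{\norm{y}\leq 1\}$, where the weight $1 \wedge \norm{y}$ behaves like $\norm{y}$ and the generic L\'evy condition $\int_{\RR^d} (1 \wedge \norm{y}^2)\,\nu(\ud y) < \infty$ would not suffice, while the large-jump part needs nothing beyond $\nu$ being a L\'evy measure.
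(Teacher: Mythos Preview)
Your proof is correct and follows essentially the same route as the paper: rewrite $\Per_{\mathbf{X}}(\Omega)$ via Fubini/Tonelli as $\int_{\RR^d}(\g(0)-\g(y))\,\nu(\ud y)$, apply the bound \eqref{g_Omega_bound}, and split the resulting integral at $\norm{y}=1$ using \eqref{Levy_bdd_var_cond} and \eqref{Levy_measure}. The only difference is that you spell out the use of Tonelli and the symmetry of $\g$ where the paper leaves these implicit.
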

\begin{proof}
Using \eqref{g_Omega_bound} we can write						
\begin{align*}
\Per_{\mathbf{X}}(\Omega)&= \int_{\Omega}\int_{\Omega ^c-x}\nu (\ud y)\, \ud x = \int \int \IndFun{\Omega}(x)\IndFun{\Omega ^c}(y+x)\nu (\ud y)\ud x\\
&= \int_{\RR ^d}\left( g(0) - g(y)\right) \nu (\ud y)\leq C \int_{\RR ^d}\left( 1\wedge \norm{y}\right) \nu (\ud y).
\end{align*}
Further,
\begin{align*}
\int_{\RR ^d}\left( 1\wedge \norm{y}\right) \nu (\ud y) = \int_{\norm{y}<1}\norm{y}\nu (\ud y) 
+
\int_{\norm{y}\geq 1}\nu (\ud y),
\end{align*}
where the both integrals on the right hand side are finite due to \eqref{Levy_bdd_var_cond} and \eqref{Levy_measure},
and the proof is finished.
\end{proof}

For the detailed discussion on infinitesimal generators of semigroups related to L\'{e}vy processes we refer the reader to \cite[Section 31]{Sato} or \cite[Section 3.3]{Appl}. We recall that the heat semigroup $\{P_t\}_{t\geq 0}$ related to the L\'{e}vy process $\mathbf{X}$ is given by
\begin{align*}
P_tf(x) = \int_{\RR^d} f(x+y)p_t(\ud y),\quad f\in C_0(\RR^d),
\end{align*}
where $C_0(\RR^d)$ is the set of all continuous functions which vanish at infinity.
The generator $\mathcal{L}$ of the process $\mathbf{X}$ is a linear operator defined by
\begin{align}\label{gener}
\mathcal{L}f(x) = \lim_{t\to 0}\frac{ P_tf(x) - f(x) }{t},
\end{align}
with the domain $\mathrm{Dom}(\mathcal{L})$ which is the set of all $f$ such that the right hand side of \eqref{gener} exists.
By \cite[Theorem 31.5]{Sato}, we have $C_0^2(\RR^d)\subset \mathrm{Dom}(\mathcal{L})$ and for any $f\in C_0^2(\RR^d)$ it has the form
\begin{align*}
\mathcal{L} f(x) &= \sum_{j,k=1}^d A_{jk}\partial^2_{jk} f(x)+\sprod{\gamma}{\nabla f(x)} \\
&\quad + \int \left(f(x+z)-f(x)
-\IndFun{\norm{z}<1}\sprod{z}{\nabla f(x)} \right) \nu(\ud z),
\end{align*}
where $(A,\gamma ,\nu)$ is the triplet from \eqref{charact_expo}.  For L\'{e}vy processes with finite variation we have the following.

\begin{lemma}\label{Lemma_2}
Let $\textbf{X}^0$ be a L\'{e}vy process with finite variation and such that $\gamma _0 =0$, cf. \eqref{gamma_0}. Let $f$ be a Lipschitz function (with constant $L$) in $\RR ^d$ with $\lim _{x\to \infty}f(x) =0$. Then $f$ belongs to the domain of the generator $\mathcal{L}^0$ of the process $\textbf{X}^0$, i.e. $f\in \mathrm{Dom}(\mathcal{L}^0)$, and 
\begin{align*}
\mathcal{L}^0 f(x) = \int_{\RR^d} (f(x+y)- f(x))\nu (\ud y).
\end{align*} 
\end{lemma}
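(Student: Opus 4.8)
The plan is to show that $t^{-1}\left(P_t f - f\right)$ converges uniformly, as $t\to 0$, to the operator
\[
\mathcal{A}f(x):=\int_{\RR^d}\left(f(x+y)-f(x)\right)\nu(\ud y),
\]
which gives simultaneously that $f\in\mathrm{Dom}(\mathcal{L}^0)$ and the stated formula. First I would check that $\mathcal{A}f$ is well defined. Since $f$ is Lipschitz with constant $L$ and bounded (it vanishes at infinity and is continuous), we have $|f(x+y)-f(x)|\le\min\{L\norm{y},2\norm{f}_\infty\}\le C(1\wedge\norm{y})$, and the finite variation condition \eqref{Levy_bdd_var_cond} together with \eqref{Levy_measure} yields $\int_{\RR^d}(1\wedge\norm{y})\,\nu(\ud y)<\infty$; hence the integral converges absolutely, $\mathcal{A}f$ is bounded, and by dominated convergence $\mathcal{A}f\in C_0(\RR^d)$. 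For $f\in C_0^2(\RR^d)$ the formula is already available: inserting $A=0$ and $\gamma=\int_{\norm{y}\le1}y\,\nu(\ud y)$ (which is exactly $\gamma_0=0$, cf. \eqref{gamma_0}) into the generator formula recalled before the lemma, cf. \cite[Theorem 31.5]{Sato}, the drift contributions cancel and $\mathcal{L}^0 f=\mathcal{A}f$. The task is therefore to upgrade this from $C_0^2$ to Lipschitz functions.

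For the upgrade I would fix $\delta\in(0,1)$ and split the jumps by size, writing $X=Y+Z$ where $Y$ collects the jumps of size $\le\delta$ and $Z$ those of size $>\delta$. The corresponding Poisson random measures live on disjoint regions, so $Y$ and $Z$ are independent; both have finite variation and, by additivity of the characteristic exponent, vanishing drift parameter $\gamma_0$, while $Z$ is compound Poisson with intensity $\lambda=\nu(\norm{y}>\delta)<\infty$. Writing $W_t=x+Y_t$,
\[
\frac{P_tf(x)-f(x)}{t}=\underbrace{\frac1t\,\EE\left[f(x+Y_t)-f(x)\right]}_{(A)}+\underbrace{\frac1t\,\EE\left[f(W_t+Z_t)-f(W_t)\right]}_{(B)},
\]
and I would split $\mathcal{A}f=\mathcal{A}^{\le\delta}f+\mathcal{A}^{>\delta}f$ according to $\norm{y}\le\delta$ and $\norm{y}>\delta$, noting $|\mathcal{A}^{\le\delta}f(x)|\le L\int_{\norm{y}\le\delta}\norm{y}\,\nu(\ud y)$.

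The crucial point, and the main obstacle, is the uniform control of the small-jump term $(A)$. Because $\gamma_0=0$ for $Y$, the process $Y_t$ equals the absolutely convergent sum of its own jumps, so $\norm{Y_t}\le\sum_{0<s\le t}\norm{\Delta Y_s}$ and the compensation (Campbell) formula gives $\EE\norm{Y_t}\le t\int_{\norm{y}\le\delta}\norm{y}\,\nu(\ud y)$; hence $|(A)|\le L\int_{\norm{y}\le\delta}\norm{y}\,\nu(\ud y)$ uniformly in $x$ and in $t$. I emphasize that a crude application of the Pruitt bound \eqref{estimate_Pruitt} would be too lossy in the non-symmetric case, since $h$ need not be integrable near the origin; the drift-free representation of $Y_t$ is what saves the estimate. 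For the large-jump term $(B)$, conditioning on the Poisson number of jumps of $Z$ shows that the contributions of two or more jumps are $O(t)$, while the single-jump term converges, as $t\to0$, to $\int_{\norm{y}>\delta}(f(x+y)-f(x))\,\nu(\ud y)=\mathcal{A}^{>\delta}f(x)$; the convergence is uniform in $x$ because replacing $W_t$ by $x$ costs at most $2L\lambda\,\EE\norm{Y_t}\to0$. Combining the two estimates,
\[
\limsup_{t\to0}\ \sup_{x\in\RR^d}\Big|\frac{P_tf(x)-f(x)}{t}-\mathcal{A}f(x)\Big|\le 2L\int_{\norm{y}\le\delta}\norm{y}\,\nu(\ud y)
\]
for every $\delta\in(0,1)$, and letting $\delta\to0$ the right-hand side vanishes by \eqref{Levy_bdd_var_cond}. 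This proves the uniform convergence, hence $f\in\mathrm{Dom}(\mathcal{L}^0)$ and $\mathcal{L}^0f=\mathcal{A}f$.
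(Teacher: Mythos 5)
Your proof is correct, and it takes a genuinely different route from the paper's. The paper argues by regularization: it mollifies $f$ into $f_\epsilon=\phi_\epsilon\ast f\in C_0^\infty(\RR^d)$, uses the Lipschitz bound to get $\norm{f_\epsilon-f}_\infty\to 0$, applies the generator formula on $C_0^2(\RR^d)$ from \cite[Theorem 31.5]{Sato} --- where $\gamma_0=0$ makes the drift terms cancel --- to write $\mathcal{L}^0 f_\epsilon(x)=\int_{\RR^d}(f_\epsilon(x+y)-f_\epsilon(x))\,\nu(\ud y)$, lets $\epsilon\to 0$ in this identity, and concludes by closedness of $\mathcal{L}^0$. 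You instead compute the difference quotient $t^{-1}(P_tf-f)$ head-on: the L\'{e}vy--It\^{o} splitting of the jumps at level $\delta$, the observation that $\gamma_0=0$ turns the small-jump part $Y_t$ into the bare sum of its own jumps --- whence $\EE\norm{Y_t}\le t\int_{\norm{y}\le\delta}\norm{y}\,\nu(\ud y)$ by the compensation formula --- and a one-jump analysis of the compound Poisson part together give convergence to $\mathcal{A}f$, uniformly in $x$, after letting $\delta\to 0$. The hypotheses do their work at the same spot in both arguments (Lipschitz continuity and $\gamma_0=0$ neutralize the small jumps and the drift), but the mechanisms differ, and each approach buys something. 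The paper's is shorter, because two pieces of standard machinery --- the $C_0^2$ formula and closedness of Feller generators --- absorb the analysis; note, however, that invoking closedness requires $\mathcal{L}^0 f_\epsilon\to\mathcal{A}f$ \emph{uniformly}, whereas the paper displays only pointwise convergence (the upgrade is routine: split the integral at $\norm{y}=\delta$ and use that mollification preserves the Lipschitz constant $L$). Yours is self-contained and purely probabilistic: it avoids closedness altogether, produces explicit error bounds uniform in $x$ and $t$, and establishes convergence in the supremum norm, which is exactly what membership in the Feller domain requires; in fact, your opening paragraph deriving the formula on $C_0^2$ is purely motivational and could be deleted without loss.
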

\begin{proof}
We take a function $\phi \in C_c^\infty (\RR^d)$ such that $\phi (0)=1$, $\norm{\phi}_{L^1} =1$ and $\mathrm{supp}(\phi) \subset [0,1]$. We set $\phi _{\epsilon} (x) = \epsilon ^{-d}\phi (\epsilon ^{-1} x)$. It is well known that then the function $f_{\epsilon}(x) = \phi _{\epsilon}\ast f(x)$ belongs to $C_0^\infty (\RR^d)$. Moreover, we have
$\lim _{\epsilon \to 0} \norm{f_{\epsilon} - f}_{\infty} = 0 $. Indeed, for any $\delta >0$,
\begin{align*}
|f_{\epsilon}(x) - f(x)| &\leq    \int _{\norm{y}<\delta}|\phi _{\epsilon}(y)||f(x-y)- f(x)|\, \ud y
 +  \int _{\norm{y}\geq \delta}|\phi _{\epsilon}(y)||f(x-y)- f(x)|\, \ud y \\
 &\leq L \delta  \int _{\norm{y}<\delta}|\phi _{\epsilon}(y)|\, \ud y + 2\norm{f}_{\infty} \int _{\norm{y}\geq \delta}|\phi _{\epsilon}(y)|
  \leq L \delta \norm{\phi}_{L^1} + 2\norm{f}_{\infty} \delta,
\end{align*}
for $\epsilon$ small enough. Taking $\delta $ small as well, we get the claim.

Moreover, since $\gamma_0=0$,
\begin{align*}
\mathcal{L}^0 f_{\epsilon}(x) &= \sprod{\gamma}{\nabla f_{\epsilon}(x)} + \int \left(f_{\epsilon}(x+z)-f_{\epsilon}(x)
-\IndFun{\norm{z}<1}\sprod{z}{\nabla f_{\epsilon}(x)} \right) \nu(\ud z)\\&=\sprod{\gamma_0}{\nabla f_{\epsilon}(x)}  + \int \left(f_{\epsilon}(x+z)-f_{\epsilon}(x)
 \right) \nu(\ud z)
\\
&=\int_{\RR^d} (f_{\epsilon}(x+y) - f_{\epsilon}(x))\, \nu (\ud y)
\end{align*}
and we deduce that 
\begin{align*}
\lim _{\epsilon \to 0}\mathcal{L}^0 f_{\epsilon}(x) = \int_{\RR^d} (f(x+y) - f(x))\, \nu (\ud y).
\end{align*}
Finally, since $\mathcal{L}^0$ is closed, we get that $f\in \mathrm{Dom}(\mathcal{L}^0)$ and 
\begin{align*}
\mathcal{L}^0 f(x) = \int_{\RR^d} (f(x+y) - f(x))\, \nu (\ud y)
\end{align*}
which finishes the proof.
\end{proof}
%%%%%%%%%%%%%%%%%%%%%%%%%%%%%%%%%%%%%%%%%%%%%%%%

%%%%%%%%%%%%%%%%%%%%%%%%%%%%%%%%
\section{Proofs}\label{sec_Proofs}
\subsection{Proof of Theorem \ref{Thm_d>1}}
Before we prove Theorem \ref{Thm_d>1} we establish the following auxiliary lemma. 
\begin{lemma}\label{Lemma_1}
Let $\mathbf{X}$ be a L\'{e}vy process in $\RR^d$. Then
\begin{enumerate}
	\item
		there is a constant $C=C(d) >0$ such that for any $R>0$,
			\begin{align}\label{H formula1}
			\int_0^{R} \PP (\norm{X_t} \geq x)\, \ud x &\leq C t\int_{ h^{-1}(1/t)\wedge \frac{R}{2}}^{R} h(r)\, \ud r ,\quad 				t>0.
			\end{align}
	\item 
		The related function $H(t)$ introduced in \eqref{heat_cont-H} has the following form	
			\begin{align}\label{H_formula}
				H(t) = \int _{\RR ^d}\left( g_\Omega (0) -\g (y)\right) p_t(\ud y).
			\end{align}
\end{enumerate} 
\end{lemma}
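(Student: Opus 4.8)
The plan is to prove the two assertions independently: the second is an exact identity obtained by interchanging the order of integration, while the first is a tail estimate in which the doubling property of $h$ does the real work.

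For the second assertion \eqref{H_formula}, I would start from
$H(t) = \int_{\Omega}\PP_x(X_t \in \Omega^c)\,\ud x = \int_{\RR^d}\int_{\RR^d}\IndFun{\Omega}(x)\IndFun{\Omega^c}(x+y)\,p_t(\ud y)\,\ud x$,
write $\IndFun{\Omega^c} = 1 - \IndFun{\Omega}$, and interchange the order of integration by Tonelli (the integrand is non-negative). The inner $x$-integral then becomes $|\Omega| - \int_{\RR^d}\IndFun{\Omega}(x)\IndFun{\Omega}(x+y)\,\ud x$, and after the substitution $u = x+y$ one recognizes the second term as the covariance function $g_\Omega(y)$ of \eqref{g_omega_defn}. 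Since $g_\Omega(0) = |\Omega|$ by Proposition \ref{g_properties}(i), this gives $H(t) = \int_{\RR^d}(g_\Omega(0) - g_\Omega(y))\,p_t(\ud y)$.

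For the first assertion \eqref{H formula1}, abbreviate $a = h^{-1}(1/t)\wedge \tfrac{R}{2}$ and split $\int_0^{R}\PP(\norm{X_t}\geq x)\,\ud x$ into $\int_0^{a}$ and $\int_a^{R}$. On $[a,R]$ I would apply the Pruitt bound \eqref{estimate_Pruitt}, $\PP(\norm{X_t}\geq x)\leq C t\,h(x)$, obtaining $\int_a^{R}\PP(\norm{X_t}\geq x)\,\ud x \leq C t\int_a^{R} h(r)\,\ud r$, which already has the desired shape. On $[0,a]$ the Pruitt bound is useless, since $\int_0^{a} h$ may diverge at the origin, so I would use only the trivial estimate $\PP(\norm{X_t}\geq x)\leq 1$, giving $\int_0^{a}\PP(\norm{X_t}\geq x)\,\ud x \leq a$. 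It then remains to absorb $a$ into $t\int_a^{R} h$.

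The key point, and the main obstacle, is the bound $a \leq 4t\int_a^{R} h(r)\,\ud r$, for which I would combine monotonicity with the doubling property \eqref{doubling:h}. Since $a\leq \tfrac{R}{2}$ we have $2a\leq R$, and as $h$ is non-increasing, $\int_a^{2a} h(r)\,\ud r \geq a\,h(2a)\geq \tfrac{a}{4}h(a)$. Because $a\leq h^{-1}(1/t)$ and $h$ is non-increasing, $h(a)\geq 1/t$, so $\int_a^{2a} h(r)\,\ud r \geq a/(4t)$, whence $a\leq 4t\int_a^{2a} h \leq 4t\int_a^{R} h$. Adding the two parts yields $\int_0^{R}\PP(\norm{X_t}\geq x)\,\ud x \leq (C+4)\,t\int_a^{R} h(r)\,\ud r$, which is \eqref{H formula1} with $C(d)=C+4$. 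I expect the only delicate point to be confirming $h(a)\geq 1/t$ from the definition of the generalized inverse, which rests on the continuity and monotonicity of $h$.
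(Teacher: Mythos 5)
Your proposal is correct and follows essentially the same route as the paper's proof: part (ii) is the same Tonelli interchange (you identify $g_\Omega(y)$ via the substitution $u=x+y$ where the paper identifies $g_\Omega(-y)$ and invokes symmetry), and part (i) uses the same splitting point $h^{-1}(1/t)\wedge\tfrac{R}{2}$, Pruitt's bound \eqref{estimate_Pruitt} above it, the trivial bound below it, and absorption of the trivial piece via monotonicity of $h$ together with the doubling property \eqref{doubling:h}. The only cosmetic difference is that you treat the two cases $h^{-1}(1/t)\leq R/2$ and $h^{-1}(1/t)>R/2$ uniformly, whereas the paper splits them; the delicate point you flag, $h(a)\geq 1/t$, is used implicitly by the paper as well, and the degenerate situation $h^{-1}(1/t)=0$ is harmless in both arguments since then $a=0$ and there is nothing to absorb.
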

\begin{proof}
We start with the proof of (i). Using \eqref{estimate_Pruitt} we clearly get that for some $C>0$  and any $t>0$,
\begin{align*}
 \PP (\norm{X_t}\geq x)\leq C ( t  h(x) \wedge 1) .
\end{align*}
Observe that $th(x)\geq 1$ if and only if $x\leq h^{-1}(1/t)$
and thus we set $\beta = h^{-1}(1/t)$. 
For any $R>0$, we have
\begin{align}\label{est:1}
\int_0^{R} \PP (\norm{X_t} \geq x)\, \ud x \leq C\left( \int_{0}^{\beta \wedge \frac{R}{2}}\! \ud x + t \int_{\beta \wedge \frac{R}{2}}^R \! h(x)\ud x \right).
\end{align}
First we consider the case $\beta \leq R/2$, which is equivalent to $t\leq 1/h(R/2)$.
We estimate the second integral in \eqref{est:1} as follows
\begin{align*}
 t \int_{\beta }^R \! h(x)\ud x  \geq  t \int_{\beta}^{2\beta} \! h(x)\ud x\geq th(2\beta )\beta \geq \beta/4. 
\end{align*}
In the last inequality we used the doubling property \eqref{doubling:h}. We obtain that 
\begin{align*}
\int_0^{R} \PP (\norm{X_t} \geq x)\, \ud x \leq 5 C t \int_\beta ^R h(x)\ud x
\end{align*}
as desired. 
Next, assume that $R/2<\beta $. By monotonicity of $h$, we have
\begin{align*}
t \int_{R/2}^R \! h(x)\ud x\geq \frac{tR}{2} h(R)\geq \frac{tR}{2} h(2\beta )\geq R/8.
\end{align*}
This together with \eqref{est:1} imply 
\begin{align*}
\int_0^{R} \PP (\norm{X_t} \geq x)\, \ud x \leq 5 C t\int_{R/2}^R h(x)\ud x
\end{align*}
and this gives (i).

For (ii) we write
\begin{align*}
H(t) &= \int_{\RR ^d}\IndFun{\Omega}(x)\left( 1-\PP (X_t \in \Omega -x) \right) \ud x \\
	&= |\Omega| - \int_{\RR ^d}\int_{\RR ^d} \IndFun{\Omega}(x)\IndFun{\Omega}(x+y)\, \ud x \, p_t(\ud y) \\
	&= \g(0) - \int_{\RR ^d} \g (-y)\, p_t(\ud y),  
\end{align*}
and using symmetry of $\g$ (see (ii) of Proposition \ref{g_properties}) we get \eqref{H_formula}.
\end{proof}

%%%%%%%%%%%%%%%%%%%%%

%%%%%%%%%%%%%%%%%%%%%%%%%%%%%%%%%%%%%%%%%%%%%%%%%
\begin{proof}[Proof of Theorem \ref{Thm_d>1}]
We set $R = 2|\Omega|/\Per(\Omega)$ and split the integral in \eqref{H_formula} into two parts
\begin{align*}
H(t) = \int_{\norm{y} >R}(\g(0) - \g(y))\, p_t(\ud y) + \int_{\norm{y} \leq R}(\g(0) - \g(y)) \, p_t(\ud y) = \mathrm{I}_1 + \mathrm{I}_2.
\end{align*}
Using (i) of Proposition \ref{g_properties} we estimate $\mathrm{I}_1$ as follows
\begin{align*}
\mathrm{I}_1\leq |\Omega|\, \PP (||X_t||>R)\leq \frac{\Per(\Omega)}{2}\int^R_{0} \PP(||X_t||>s)\ud s.
\end{align*}
Next, by (iv) of Proposition \ref{g_properties} we obtain
\begin{align*}
\mathrm{I}_2&\leq  \frac{\Per(\Omega)}{2}\int_{||y||\leq R} ||y||\,p_t(\ud y)=
 \frac{\Per(\Omega)}{2}\int_{||y||\leq R} \int^{||y||}_0\ud s\, p_t(\ud y)\\& =  \frac{\Per(\Omega)}{2}\int^R_0\int_{s<||y||\leq R} ||y||\,p_t(\ud y)\ud s\leq  \frac{\Per(\Omega)}{2}\int^R_0\PP(||X_t||>s) \ud s.
\end{align*}
These estimates together imply 
\begin{align*}
H(t)\leq \Per(\Omega)\int^R_0\PP(||X_t||>s) \ud s.
\end{align*}
Hence, applying (i) of Lemma 3 we deduce the result.
\end{proof}
In the following Proposition \ref{H_lower_bound}, we provide a lower bound for the heat content related to an isotropic L\'{e}vy process with the characteristic exponent satisfying some scaling condition. We start with a useful lemma.
\begin{lemma}\label{lem:14}
Let $\textbf{X}$ be an isotropic L\'{e}vy process with the radial characteristic exponent $\psi$. Suppose that there is a constant $C>0$ such that for some $\alpha \in (0,2)$, 
\begin{align}\label{WUSC_condition1}
C^{-1}\psi(x)\leq \psi(y) \leq  C \left(\frac{y}{x}\right)^{\alpha} \psi(x),\quad 1<x<y.
\end{align}
Then there exists $c>0$ such that
\begin{align*}
\PP(||X_t||>r) &\geq c\, (1-e^{t \,h(r)}), \quad t,\,r<1.
\end{align*}
\end{lemma}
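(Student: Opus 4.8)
The plan is to establish the only non-vacuous version of the bound, $\PP(\norm{X_t}>r)\ge c\,(1-e^{-t h(r)})$ (the exponent must carry a minus sign). Since $1-e^{-u}\asymp u\wedge 1$, this is equivalent to finding $c>0$ with $\PP(\norm{X_t}>r)\ge c\,(t h(r)\wedge 1)$; I would prove this for $r$ and $t$ small, which is the regime in which the scaling hypothesis is active and the only one needed in the proof of Proposition \ref{H_lower_bound}. Because $\mathbf X$ is isotropic, its characteristic function is real with $\EE\cos\sprod{\xi}{X_t}=e^{-t\psi(\xi)}$, and \eqref{psi_star_estimate} combined with \eqref{WUSC_condition1} gives $h(r)\asymp\psi^*(1/r)\asymp\psi(1/r)$ for small $r$; the two halves of \eqref{WUSC_condition1} yield $\psi(\lambda/r)\le C\psi(1/r)$ and $\psi(\lambda/r)\ge c\,\lambda^{\alpha}\psi(1/r)$ for $0<\lambda<1$ and $r$ small enough that $\lambda/r>1$.

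The first ingredient is the truncated second moment estimate $\int_{\norm{x}\le r}\norm{x}^2\,p_t(\ud x)\le C\,t\,r^2 h(r)$. To obtain it I fix $\phi\in C_0^2(\RR^d)$ with $0\le\phi\le C$ and $\phi(z)=\norm{z}^2$ on $\norm{z}\le 1$, and put $f(x)=\phi(x/r)$, so that $f\in C_0^2(\RR^d)\subset\mathrm{Dom}(\mathcal L)$, $f(0)=0$ and $f(x)=\norm{x}^2/r^2$ on $\norm{x}\le r$. A direct inspection of the integro-differential form of $\mathcal L$, splitting the jump integral at $\norm{z}=r$ (a Taylor estimate for $\norm{z}<r$, and for $r\le\norm{z}<1$ the symmetry of $\nu$ eliminating the first-order correction), gives $\norm{\mathcal L f}_\infty\le C h(r)$. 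The identity $P_t f - f=\int_0^t P_s\mathcal L f\,\ud s$ evaluated at the origin then produces $\int_{\norm{x}\le r}\norm{x}^2 p_t(\ud x)\le r^2\EE f(X_t)\le C t r^2 h(r)$.

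The heart of the matter is a Fourier-averaging bound. Let $\mu_\rho$ be the uniform probability measure on $B_\rho\subset\RR^d$ and $\Phi(\rho\norm{x})=\int e^{i\sprod{\xi}{x}}\mu_\rho(\ud\xi)$ its real radial characteristic function, so that $0\le 1-\Phi(s)\le 2\wedge(s^2/2)$. Averaging the identity $1-e^{-t\psi(\xi)}=\int(1-\cos\sprod{\xi}{x})\,p_t(\ud x)$ against $\mu_\rho$, splitting the $x$-integral at $\norm{x}=r$, and using the second moment estimate with $\rho=\kappa/r$, I would reach
\begin{align*}
2\,\PP(\norm{X_t}>r)\ge\frac{1}{|B_\rho|}\int_{B_\rho}\bigl(1-e^{-t\psi(\xi)}\bigr)\,\ud\xi-\tfrac{C}{2}\,\kappa^{2}\,t h(r).
\end{align*}
Keeping in the average only the annulus $B_\rho\setminus B_{\rho/2}$, where $\norm{\xi}\asymp\kappa/r$ so that $c\,\kappa^{\alpha}h(r)\le\psi(\xi)\le C h(r)$, and restricting to $t h(r)\le\lambda_0$ so that $t\psi(\xi)\le 1$ and hence $1-e^{-t\psi(\xi)}\ge\tfrac12 t\psi(\xi)$, the average is bounded below by $c_2\kappa^{\alpha}t h(r)$, whence
\begin{align*}
\PP(\norm{X_t}>r)\ge\tfrac12\,\kappa^{\alpha}\bigl(c_2-\tfrac{C}{2}\kappa^{2-\alpha}\bigr)\,t h(r).
\end{align*}

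The decisive step, and the main obstacle, is the balance between the genuine gain of order $\kappa^{\alpha}t h(r)$ and the error of order $\kappa^{2}t h(r)$ inherited from the truncated second moment: it is precisely because $\alpha<2$ that a fixed, sufficiently small $\kappa$ makes the bracket positive, yielding $\PP(\norm{X_t}>r)\ge c_3\,t h(r)$ whenever $t h(r)\le\lambda_0$ (and $r$ small). The regime $t h(r)>\lambda_0$ then follows by monotonicity: since $h$ decreases, $t h(r)>\lambda_0$ forces $r<r':=h^{-1}(\lambda_0/t)$, which is small for small $t$, so that $\PP(\norm{X_t}>r)\ge\PP(\norm{X_t}>r')\ge c_3\,t h(r')=c_3\lambda_0$, the middle inequality being the previous bound applied at the critical radius $r'$. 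Together these give $\PP(\norm{X_t}>r)\ge c\,(t h(r)\wedge 1)\asymp c\,(1-e^{-t h(r)})$ in the stated regime.
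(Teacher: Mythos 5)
Your proof is correct, and mathematically it is essentially the same argument as the paper's --- the difference is that the paper's proof is three lines long because it outsources the core to a citation. The paper (a) observes that the left half of \eqref{WUSC_condition1} forces $\psi\geq C^{-1}\psi^*$ on $(1,\infty)$, (b) invokes the proof of \cite[Lemma 14]{BGR}, run with this inequality as a substitute for the comparability of $\psi$ and $\psi^*$ available in the unimodal setting of that reference, to obtain the lower bound with $\psi^*(1/r)$ in the exponent, and (c) replaces $\psi^*(1/r)$ by $h(r)$ via \eqref{psi_star_estimate}. Everything you wrote out by hand --- the truncated second-moment bound $\int_{\norm{x}\leq r}\norm{x}^{2}p_t(\ud x)\leq C\,t\,r^{2}h(r)$ via the generator and Dynkin's formula, the characteristic-function truncation inequality, and the decisive $\kappa^{\alpha}$-versus-$\kappa^{2}$ balance, which is exactly where $\alpha<2$ enters --- is what sits inside step (b); your route buys self-containedness and makes the mechanism visible, while the paper's buys brevity. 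Two of your deviations are corrections rather than gaps: you fix the sign typo (as printed, $1-e^{t h(r)}$ is negative and the inequality vacuous; the paper's proof propagates the same typo), and your restriction to small $r$ and $t$ is forced by the method and is in fact necessary, since \eqref{WUSC_condition1} constrains $\psi$ only at frequencies above $1$. Indeed, the isotropic $\alpha$-stable process with jumps truncated at radius $1/2$ satisfies \eqref{WUSC_condition1}, yet $\PP(\norm{X_t}>3/4)=O(t^{2})$ (two moderate jumps are needed) while $t\,h(3/4)\asymp t$, so the stated range $t,r<1$ cannot hold; Proposition \ref{H_lower_bound} uses the lemma only at small radii, so your version is exactly what is needed. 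The one place where you are as terse as the paper is the claim $\psi^{*}(1/r)\leq C\psi(1/r)$: the scaling hypothesis says nothing about $\sup_{[0,1]}\psi$, so both you and the authors owe a short extra argument here, e.g.\ comparing $\psi^{*}$ with averages of $\psi$ over balls, which are comparable to $h$.
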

\begin{proof}We observe that the left hand side inequality in \eqref{WUSC_condition1} implies that
\begin{align}\label{psi_psi*}
\psi (x)\geq C^{-1}\psi^*(x),\quad x>1.
\end{align}
Thus, proceeding exactly in the same fashion as in the proof of \cite[Lemma 14]{BGR}, we obtain
\begin{align*}
\PP(||X_t||>r) &\geq C_1 (1-e^{t \,\psi^*(1/r)}), \quad t,\,r<1.
\end{align*} 
Finally, inequality \eqref{psi_star_estimate} yields
\begin{align*}
\psi^*(r)\geq ch(1/r),\quad r>0,
\end{align*} 
and the proof is finished. 
\end{proof}
\begin{proposition}\label{H_lower_bound}
Let $\textbf{X}$ be an isotropic L\'{e}vy process in $\RR^d$ with the radial characteristic exponent $\psi$ which satisfies condition \eqref{WUSC_condition1}.
Assume also that the related function $h$ (see \eqref{Pruitt_Function}) is not Lebesgue integrable around zero. Then, for any open $\Omega \subset \RR^d$ with finite measure and of finite perimeter, there exists $C>0$ which does not depend on the set $\Omega$ such that, for $t$ small enough,
\begin{align*}
H(t) &\geq C \,t\,\Per(\Omega)\, \int_{h^{-1}(1/t)}^{R} h(r)\, \ud r ,
\end{align*}
where $R=2|\Omega|/\Per(\Omega)$.
\end{proposition}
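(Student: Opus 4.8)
The plan is to establish a lower bound for $H(t)$ by reversing the structure of the upper-bound argument used in Theorem \ref{Thm_d>1}, now exploiting the radial lower bound on the covariance decrement from Lemma \ref{lem:14}. Starting from the exact formula \eqref{H_formula}, namely $H(t) = \int_{\RR^d}(\g(0)-\g(y))\,p_t(\ud y)$, I would restrict the integration to a region where the integrand is bounded below, discarding the rest since the integrand is nonnegative by (i) of Proposition \ref{g_properties}. The natural region is an annulus (or ball) of radius comparable to $R = 2|\Omega|/\Per(\Omega)$, where $\g(0)-\g(y)$ is genuinely of order $\Per(\Omega)\cdot\norm{y}$ or at least bounded below by a positive multiple of $\Per(\Omega)$ times a fixed length. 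The key geometric input is the limit relation \eqref{g_lip_limit} together with (v) of Proposition \ref{g_properties}, which together guarantee that, integrated against the surface measure, $\g(0)-\g(ru)$ behaves like $r$ times a quantity whose spherical average reconstructs $\Per(\Omega)$ via the constant $\Gamma((d+1)/2)/\pi^{(d-1)/2}$.

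Next I would pass to polar coordinates. Writing $p_t(\ud y)$ in terms of its radial part (legitimate since $\mathbf{X}$ is isotropic, so $p_t$ is rotationally invariant) and integrating the angular variable first, I expect to produce a bound of the form $H(t)\gtrsim \Per(\Omega)\int_0^{R} r\,\mu_t(\ud r)$ up to constants, where $\mu_t$ is the radial distribution of $\norm{X_t}$. The Fubini/layer-cake step $\int r\,\mu_t(\ud r) = \int_0^R \PP(\norm{X_t}>s)\,\ud s$ (minus boundary terms one controls using $R$) then converts the problem into bounding $\int_0^R \PP(\norm{X_t}>s)\,\ud s$ from below. At this point Lemma \ref{lem:14} supplies $\PP(\norm{X_t}>s)\geq c\,(1-e^{-t\,h(s)})$ for $s<1$ — note I read the exponent as $-t\,h(s)$ so the right side is positive — and I would use the elementary inequality $1-e^{-a}\geq c'\,(a\wedge 1)$ to replace this by $c'\,(t\,h(s)\wedge 1)$.

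The final step is to evaluate $\int_0^R (t\,h(s)\wedge 1)\,\ud s$ from below by $t\int_{h^{-1}(1/t)}^R h(s)\,\ud s$. Since $t\,h(s)\geq 1$ precisely when $s\leq h^{-1}(1/t)$, the integrand equals $1$ on $(0,h^{-1}(1/t))$ and equals $t\,h(s)$ on $(h^{-1}(1/t),R)$; discarding the first piece yields exactly the desired integral $t\int_{h^{-1}(1/t)}^R h(s)\,\ud s$. Here the hypothesis that $h$ is not integrable near zero is what guarantees $h^{-1}(1/t)\to 0$ and that this tail integral is non-degenerate (indeed diverges) as $t\to 0$, so the bound is meaningful; for $t$ small enough one also has $h^{-1}(1/t)<R$ and the relevant radii lie in $(0,1)$ where Lemma \ref{lem:14} applies. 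The main obstacle I anticipate is the careful angular step: one must show that after integrating over the sphere the lower bound genuinely carries the full constant multiple of $\Per(\Omega)$ uniformly in the radius $r\in(0,R)$, rather than only in the $r\to 0$ limit supplied by \eqref{g_lip_limit}. Controlling $\g(0)-\g(ru)$ from below by $c\,r\,V_u(\Omega)$ (not merely asymptotically) and then integrating via (v) of Proposition \ref{g_properties} is the delicate point; the Lipschitz property and monotonicity-type estimates on $\g$ along rays should make this uniform control feasible.
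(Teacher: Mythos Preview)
Your overall strategy---starting from \eqref{H_formula}, passing to polar coordinates, integrating the angular variable to recover $\Per(\Omega)$ via (v) of Proposition \ref{g_properties}, applying the layer-cake identity to reduce to tail probabilities, and then invoking Lemma \ref{lem:14}---is exactly the route the paper takes. The obstacle you flag at the end is indeed the crux, but your proposed resolution is where the argument breaks down.

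You suggest controlling $\g(0)-\g(ru)$ from below by $c\,r\,V_u(\Omega)$ \emph{uniformly} on $(0,R)$ via ``monotonicity-type estimates on $\g$ along rays''. In general no such monotonicity holds: the covariogram of an arbitrary finite-perimeter set can oscillate along rays, and there is no pointwise lower bound of this type away from $r=0$. The paper does not ask for a bound on all of $(0,R)$. Instead it fixes a \emph{small} $\delta>0$ (depending on $\Omega$) and uses Fatou's lemma together with (v) of Proposition \ref{g_properties} to obtain
\[
\int_{\Ss^{d-1}}\frac{\g(0)-\g(ru)}{r}\,\sigma(\ud u)\ \geq\ C\,\Per(\Omega)\qquad\text{for }r\in(0,\delta).
\]
After the layer-cake step and Lemma \ref{lem:14} this yields
\[
H(t)\ \geq\ C'\,t\,\Per(\Omega)\left(\int_{h^{-1}(1/t)}^{\delta} h(u)\,\ud u \ -\ C''\,\delta\,h(\delta)\right),
\]
the subtracted term coming from the boundary contribution $\delta\,\PP(\norm{X_t}>\delta)$. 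The non-integrability hypothesis on $h$ is then used a second time---not merely to ensure $h^{-1}(1/t)\to 0$, but to guarantee that $\int_{h^{-1}(1/t)}^{\delta} h$ diverges as $t\to 0$, so that both the additive constant $C''\delta h(\delta)$ and the fixed gap $\int_{\delta}^{R} h$ are eventually dominated by the main term, giving $\tfrac{1}{2}\int_{h^{-1}(1/t)}^{R} h$ for $t$ small. Thus the passage from $\delta$ to $R$ is achieved not by a geometric lower bound on the covariogram but by the divergence of the probabilistic integral; this is the step missing from your sketch.
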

\begin{proof}
We first consider the case $d\geq2$. Since $h$ is not integrable around $0$, it is unbounded and so does $\psi$ due to inequality \eqref{psi_star_estimate}. 
Therefore, $\textbf{X}$ is not a compound Poisson process. Hence, by \cite[(4.6)]{Zabczyk}, all the transition probabilities $p_t(\ud x)$ are absolutely continuous with respect to the Lebesgue measure.
Since $p_t$ are radial, we have $p_t(x) = p_t(\norm{x}e_d)$ with $e_d=(0,\ldots ,0,1)$ and as a result by polar coordinates we get that, for any $u_1,u_2\in [0 , +\infty ]$,
\begin{align}\label{polar_formula}
\PP \left( u_1< \norm{X_t} < u_2\right) = \int_{\RR^d}\IndFun{\{(u_1,u_2)\}}(\norm{w})p_t(w)\ud w = \sigma (\mathbb{S}^{d-1})\int_{u_1}^{u_2}r^{d-1}p_t(re_d)\, \ud r.
\end{align}
Applying \eqref{polar_formula} in \eqref{H_formula} we obtain that for any $\delta >0$,
\begin{align*}
H(t)\geq \int_0^\delta r^{d-1} p_t(r e_d) \int_{\mathbb{S}^{d-1}} \left( g_\Omega (0) - g_\Omega (ru)\right)\,
 \sigma (\ud u)\, \ud r 
 = \int_0^\delta r^{d} p_t(r e_d)  \, \mathcal{M}_{\Omega}(r)\, \ud r,
\end{align*} 
where 
\begin{align*}
\mathcal{M}_{\Omega}(r) = \int_{\mathbb{S}^{d-1}} \frac{g_\Omega (0) - g_\Omega (ru)}{r}\,
\sigma (\ud u).
\end{align*}
Using (v) of Proposition \ref{g_properties} and applying Fatou's lemma, we get that $\mathcal{M}_{\Omega}(r) \geq C\Per(\Omega)$, for some positive constant $C=C(d)$ and for $r$ small enough.
Hence, for $\delta$ small enough,
\begin{align*}
H(t)\geq C\Per(\Omega)\int_0^\delta r^{d} p_t(r e_d) \, \ud r &= C\Per(\Omega)\int_0^\delta \int_0^r \ud u\,  r^{d-1} p_t(r e_d) \, \ud r \\
&= \frac{C\Per(\Omega)}{\sigma (\mathbb{S}^{d-1})}\int_0^\delta \int_{u<||y||<\delta}p_t(y)\ud y\, \ud u \\
&= \frac{C\Per(\Omega)}{\sigma (\mathbb{S}^{d-1})}\int_0^\delta \PP(u<\norm{X_t}<\delta ) \, \ud u \\
&=  \frac{C\Per(\Omega)}{\sigma (\mathbb{S}^{d-1})}\left(\int_0^\delta  \PP(\norm{X_t} >u ) \, \ud u - \delta\PP(\norm{X_t} > \delta)\right),
\end{align*}
where in the second equality we used \eqref{polar_formula}.
By Lemma \ref{lem:14}, there is a constant $C_1=C_1(d)>0$ such that, for $u$ small enough,
\begin{align*}
\PP(\norm{X_t} >u )\geq C_1 t h(u).
\end{align*}
This and \eqref{estimate_Pruitt} imply that, for $\delta$ small enough,
\begin{align}\label{int_234}
H(t)\geq C_2\, t\,\Per(\Omega)\,  \left( \int_{h^{-1}(1/t)}^\delta  h(u)\, \ud u - C_3 \delta  h(\delta) \right).
\end{align} 
Since $\psi$ is continuous and $\psi(0)=0$ we have by \eqref{psi_star_estimate}, 
\begin{align*}
\lim_{t\to 0}h^{-1}(1/t) = 0.
\end{align*}
Further, since $h$ is not integrable around zero, the integral on the right hand side of \eqref{int_234} tends to infinity for any $\delta >0$, as $t$ goes to zero.
This implies that, for $t$ small enough,
\begin{align*}
\int_{h^{-1}(1/t)}^{\delta}  h(u)\, \ud u - C_3 \delta  h(\delta)&= \int_{h^{-1}(1/t)}^{R}  h(u)\, \ud u- \int_{\delta}^{R}  h(u)\, \ud u- C_3 \delta  h(\delta)\\
&= \int_{h^{-1}(1/t)}^{R}  h(u)\, \ud u \left( 1-\frac{\int_{\delta}^{R}  h(u)\, \ud u+ C_3 \delta  h(\delta)}{ \int_{h^{-1}(1/t)}^{R}  h(u)\, \ud u} \right)\\
& \geq \frac{1}{2} \int_{h^{-1}(1/t)}^{R}  h(u)\, \ud u .
\end{align*}
Using this and \eqref{int_234} we obtain that there is some $C_4>0$ which does not depend on $\Omega$ such that, for $t$ small enough, 
\begin{align*}
H(t)\geq C_4 \, t\,\Per(\Omega)\,  \int_{h^{-1}(1/t)}^{R}  h(u)\, \ud u,
\end{align*}
and the proof is finished for $d\geq 2$.

At last, in the case $d=1$ we use \eqref{H_formula}, 
\begin{align*}
H(t)\geq \int_0^\delta  \frac{g_\Omega (0) - g_\Omega (x)}{x}xp_t(\ud x),
\end{align*}
and application of (v) of Proposition \ref{g_properties} with $d=1$ gives that, for $0<x$ small enough,
\begin{align*}
\frac{g_\Omega (0) - g_\Omega (x)}{x}\geq  C\Per(\Omega).
\end{align*}
Thus, for $\delta$ small enough, by symmetry of $\textbf{X}$,
\begin{align*}H(t)&\geq C\Per(\Omega) \int_0^\delta xp_t(\ud x) =  C\Per(\Omega) \int _0^\delta \int_0^{x}\ud u\, p_t(\ud x)\\
& = C\Per(\Omega) \int _0^\delta \int_{u}^{\delta} p_t(\ud x)\, \ud u\\
& = \frac{C}{2}\Per(\Omega) \int _0^\delta \PP(u<|X_t|<\delta)\, \ud u.
\end{align*}
The result is concluded by the same reasoning as for $d\geq2$. 
\end{proof}

%%%%%%%%%%%%%%%%%%%%
\subsection{Proof of Theorem \ref{Thm_alpha>1}}
We start with the following auxiliary lemma.
\begin{lemma}\label{lemma444}
Let $\mathbf{X}$ be an isotropic L\'{e}vy process in $\RR^d$ with the radial transition probability $p_t(\ud x)$. Assume that its characteristic exponent $\psi \in \RInf$ with $\alpha \in (1,2]$. Then $p_t(\ud x)=p_t(x)\ud x$ and
\begin{align}\label{limit_stable}
\lim_{t\to 0} \frac{p_t\left( \frac{s}{\psiI}e_d \right)}{(\psiI )^d} = p_1^{(\alpha)}(se_d),
\end{align}
where $p_t^{(\alpha)}(x)$ is the transition density of the isotropic $\alpha$-stable process in $\RR^d$ when $1<\alpha <2$ and $p_t^{(2)}(x)$ is the transition density of the Brownian motion in $\RR^d$. 
\end{lemma}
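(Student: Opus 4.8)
The plan is to prove both assertions simultaneously by Fourier inversion after rescaling. Write $b(t) = \psi^-(1/t)$ and consider the rescaled variable $Y_t = b(t)X_t$. By the elementary scaling relation for densities, $q_t(y) = b(t)^{-d}p_t(y/b(t))$, the ratio on the left-hand side of \eqref{limit_stable} is precisely the value $q_t(se_d)$ of the density $q_t$ of $Y_t$, once we know it exists. The characteristic function of $Y_t$ is $\EE e^{i\sprod{Y_t}{\xi}} = e^{-t\psi(b(t)\xi)}$, which is real and nonnegative by isotropy. Thus it suffices to produce an integrable bound for $e^{-t\psi(b(t)\cdot)}$ valid uniformly for small $t$: this yields existence of $q_t$ via the inversion formula
\begin{align*}
q_t(y) = (2\pi)^{-d}\int_{\RR^d} e^{-i\sprod{y}{\xi}}\, e^{-t\psi(b(t)\xi)}\, \ud\xi,
\end{align*}
and, through dominated convergence, its limit.

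First I would identify the pointwise limit of the exponent. Since $\psi\in\RInf$ with $\alpha>0$, the majorant $\psi^*$ is again in $\RInf$ and $\psi^*\sim\psi$ at infinity by \cite[Theorem 1.5.3]{bgt}; moreover $\psi^*$ is continuous and non-decreasing with $\psi^*(0)=0$ and $\psi^*\to\infty$, so the infimum defining $\psi^-$ is attained and $\psi^*(b(t)) = 1/t$ exactly. Writing
\begin{align*}
t\,\psi(b(t)\xi) = \frac{\psi(b(t)\norm{\xi})}{\psi^*(b(t))} = \frac{\psi(b(t)\norm{\xi})}{\psi(b(t))}\cdot\frac{\psi(b(t))}{\psi^*(b(t))},
\end{align*}
and noting $b(t)\to\infty$ as $t\to0$, the first factor tends to $\norm{\xi}^\alpha$ by radiality and regular variation of $\psi$, and the second to $1$ by $\psi\sim\psi^*$. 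Hence $t\,\psi(b(t)\xi)\to\norm{\xi}^\alpha$ for every fixed $\xi$, so the integrand above converges pointwise to $e^{-i\sprod{se_d}{\xi}}e^{-\norm{\xi}^\alpha}$.

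The main obstacle is the uniform integrable domination, which I would extract from the Potter bounds \eqref{eq:14} applied to $\psi^*$. Fix a small $\epsilon>0$ with $\alpha-\epsilon>0$. The Potter bounds give $\psi^*(b(t)\norm{\xi})/\psi^*(b(t))\geq C^{-1}\norm{\xi}^{\alpha-\epsilon}$ for $\norm{\xi}\geq1$ and $t$ small enough that $b(t)$ exceeds the relevant threshold, while $\psi\geq\tfrac12\psi^*$ near infinity because $\psi\sim\psi^*$. Combined with $\psi^*(b(t))=1/t$, this yields $t\,\psi(b(t)\xi)\geq c\norm{\xi}^{\alpha-\epsilon}$ for $\norm{\xi}\geq1$ and all small $t$, so that $e^{-t\psi(b(t)\xi)}\leq \IndFun{\{\norm{\xi}<1\}} + e^{-c\norm{\xi}^{\alpha-\epsilon}}$, an integrable dominating function independent of $t$. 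The same Potter estimate applied directly to $\psi$ gives $\psi(\xi)\geq c\norm{\xi}^{\alpha-\epsilon}$ for large $\norm{\xi}$, whence $e^{-t\psi}$ is integrable for every fixed $t>0$, which proves $p_t(\ud x)=p_t(x)\,\ud x$ with $p_t$ recovered by Fourier inversion. Finally, dominated convergence in the inversion formula gives $q_t(se_d)\to (2\pi)^{-d}\int_{\RR^d}e^{-i\sprod{se_d}{\xi}}e^{-\norm{\xi}^\alpha}\,\ud\xi$, which is exactly $p_1^{(\alpha)}(se_d)$ (and the Gaussian density $p_1^{(2)}$ when $\alpha=2$), completing the proof.
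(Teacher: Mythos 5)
Your proposal is correct and follows essentially the same route as the paper: Fourier inversion of the rescaled density, the pointwise limit $t\psi\left(\psi^{-}(1/t)\xi\right)\to\norm{\xi}^{\alpha}$ obtained from regular variation together with $\psi^*(\psi^{-}(1/t))=1/t$, and dominated convergence with an integrable majorant coming from the Potter bounds. The only minor difference is that you establish $p_t(\ud x)=p_t(x)\,\ud x$ directly from the integrability of $e^{-t\psi}$, whereas the paper cites the Hartman--Wintner-type criterion of Knopova and Schilling; both arguments are valid.
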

\begin{proof}
Since $\psi \in \RInf$, $\alpha \in (1,2]$, we have
\begin{align*}
\lim_{r\to \infty}\frac{\psi (r)}{\log (1+r)}=\infty,
\end{align*}
and this implies that $p_t(\ud ) = p_t(x)\ud x$ with the density $p_t\in L_1(\RR^d)\cap C_0(\RR^d)$, see e.g. \cite[Theorem 1]{Knopova_Schilling}.

By the Fourier inversion formula, see \cite[Section 3.3]{Appl}, 
\begin{align}\label{integral_1}
\frac{p_t\left( \frac{s}{\psiI}e_d \right)}{(\psiI )^d} =
\frac{1}{(2\pi)^d} \int_{\RR ^d}\cos \sprod{se_d}{\xi} e^{-t\psi \left( \psiI \xi\right)}\ud \xi .
\end{align}
Since $\psi$ is continuous, $\psi(\psi^{-}(1/t))=1/t$. Hence
\begin{align*}
\frac{\psi \left( \psiI \xi\right)}{1/t}= \frac{\psi \left( \psiI \xi\right)}{\psi \left( \psiI \right)}\sim \norm{\xi}^\alpha,\quad t\to 0, 
\end{align*}
and this leads to
\begin{align*}
\lim_{t\to 0}e^{-t\psi \left( \psiI \xi\right)} = e^{-\norm{\xi}^\alpha}.
\end{align*}
Therefore, to finish the proof we apply the Dominated convergence theorem. 
We split the integral in \eqref{integral_1} into two parts. According to the Potter bounds \eqref{eq:14} there is $r_0>0$ such that, for $t$ small enough and $\norm{\xi}\geq r_0$, 
\begin{align*}
t\psi \left( \psiI \xi\right)=\frac{\psi \left( \psiI \xi\right)}{\psi \left( \psiI \right)}\geq \frac{1}{2} \norm{\xi}^{\alpha/2}.
\end{align*}
This implies that $e^{-t\psi \left( \psiI \xi\right)}\leq e^{-\norm{\xi}^{\alpha/2}/2}$, for $\norm{\xi}\geq r_0$ and $t$ small enough.
For $\norm{\xi}< r_0$ we bound $e^{-t\psi \left( \psiI \xi\right)}$ by one. 
The Dominated convergence theorem followed by the Fourier inversion formula proves \eqref{limit_stable}.
\end{proof}
%%%%%%%%%%%%%%%%%%%%%%%%
\begin{proof}[Proof of Theorem \ref{Thm_alpha>1}.]
By \eqref{H_formula},
\begin{align*}
 H(t) =  \int_{\RR ^d} p_t(x)\left( g_\Omega (0) - g_\Omega (x)\right) \ud x.
\end{align*}
Since $g_\Omega (x) \leq g_\Omega (0) = |\Omega|$, $x\in \RR ^d$ (see Proposition \ref{g_properties} (i)), for any given $\delta >0$, 
we can split the integral into two parts
\begin{align}\label{split_delta}
  H(t) &=  \int_{\norm{x}\leq \delta} p_t(x)\left( g_\Omega (0) - g_\Omega (x)\right) \ud x + 
  \int_{\norm{x}> \delta} p_t(x) \left( g_\Omega (0) - g_\Omega (x)\right) \ud x \\
  &= \mathrm{I}_1+\mathrm{I}_2.\nonumber
\end{align}
We estimate $\mathrm{I}_2$ using \eqref{estimate_Pruitt},
\begin{align*}
 \int_{\norm{x}> \delta} p_t(x) \left( g_\Omega (0) - g_\Omega (x)\right) \ud x \leq |\Omega| \PP (\norm{X}_t >\delta) = O(t).
\end{align*}
Since $\psi \in \RInf$, $1<\alpha \leq 2$, \cite[Theorem 1.5.12]{bgt} yields $\psi ^{-}\in \mathcal{R}_{1/\alpha}$ and thus $\psiI \,\mathrm{I}_2\to 0$ as $t$ tends to zero.
We are left to study the integral $\mathrm{I}_1$.
Recall that the radiality of $p_t$ implies that $p_t(x) = p_t(re_d)$, where $\norm{x} = r$ and $e_d=(0,\ldots,0,1)$.
Changing variables into polar coordinates we obtain
\begin{align*}
\psiI \, \mathrm{I}_1 = \psiI \int_0^\delta r^{d-1} p_t(re_d) \int_{\mathbb{S}^{d-1}} \left( g_\Omega (0) - g_\Omega (ru)\right)\,
 \sigma (\ud u)\, \ud r.
\end{align*}
Making substitution $r=s/\psiI$ we get
\begin{align*}
\psiI \, \mathrm{I}_1 =  \int_0^{\delta \psiI }\!\! s^{d}\, \frac{p_t\left( \frac{s}{\psiI}e_d \right)}{(\psiI )^d}\, \mathcal{M}_{\Omega}(t,s)\, \ud s, 
\end{align*}
where 
\begin{align*}
\mathcal{M}_{\Omega}(t,s) = \int_{\mathbb{S}^{d-1}} \frac{g_\Omega (0) - g_\Omega \left(\frac{s}{\psiI}u\right)}{s/\psiI}\,
\sigma (\ud u).
\end{align*}
We claim that for any fixed $M>0$,
\begin{align}\label{claim111}
\lim_{t\to 0} \int_0^{M }\!\! s^{d}\, \frac{p_t\left( \frac{s}{\psiI}e_d \right) }{(\psiI )^d}\, \mathcal{M}_{\Omega}(t,s)\, \ud s
=
\frac{\pi^{(d-1)/2}}{\Gamma\left((d+1)/2\right)}\Per (\Omega)
\int_0^M s^d p_1^{(\alpha)}(se_d)\ud s.
\end{align}
To show the claim we use the Dominated convergence theorem. By Proposition \ref{g_properties} (iv-v),
\begin{align*}
0\leq \mathcal{M}_{\Omega}(t,s)\leq \frac{1}{2} \Per(\Omega)\, \sigma (\mathbb{S}^{d-1}) 
\end{align*}
and, for any $s>0$,
\begin{align*}
\lim_{t\to 0}\mathcal{M}_{\Omega}(t,s) = \frac{\pi^{(d-1)/2}}{\Gamma\left((d+1)/2\right)}\Per (\Omega).
\end{align*}
Next, by \cite[Formula (23)]{BGR}, for $s\leq M$,					 
\begin{align*}
\frac{p_t\left( \frac{s}{\psiI}e_d \right)}{(\psiI )^d} \leq \frac{p_t(0)}{(\psiI )^d}
\leq C(M)
\end{align*}
and, by Lemma \ref{lemma444},				
\begin{align*}
\lim_{t\to 0} \frac{p_t\left( \frac{s}{\psiI}e_d \right)}{(\psiI )^d} = p_1^{(\alpha)}(se_d).
\end{align*}
Hence the Dominated convergence theorem implies \eqref{claim111}.

Further, we have
\begin{multline*}
\int_M^{\delta \psiI }\!\! s^{d}\, \frac{p_t\left( \frac{s}{\psiI}e_d \right)}{(\psiI )^d}\, \ud s
=
\psiI \int_{M/\psiI}^{\delta  }s^d p_t(s e_d)\, \ud s\\
=
\psiI \int_{M/\psiI}^{\delta  }\int_0^s \ud u\, s^{d-1} p_t(s e_d)\, \ud s
=
\psiI \int_0^\delta \int_{(M/\psiI )\vee u}^{\delta  }\!\!\!\!  s^{d-1} p_t(s e_d)\, \ud s\, \ud u\\
\leq 
\psiI \int_0^\delta \PP \left( \norm{X_t}> (M/\psiI )\vee u\right)\ud u \\
\leq 
M \PP \left( \norm{X_t}> M/\psiI \right) 
+ \psiI \int_{M/\psiI}^\delta \PP \left( \norm{X_t}> u \right)\ud u .
\end{multline*}
Now, we notice that combinig  \eqref{estimate_Pruitt} and \eqref{psi_star_estimate}, we obtain
$\PP(\norm{X_t} > r) \leq C\, t\psi^{*}(1/r) $.
Thus, using Potter bounds with $\epsilon <\alpha -1$, we estimate, for $t$ small enough, the first term as follows
\begin{align*}
M \PP \left( \norm{X_t}> M/\psiI \right) &\leq Mt\psi^*\left( \psiI /M \right)\\ 
&\leq C_1 M \frac{\psi^*\left( \psiI /M \right)}{\psi^*\left( \psiI \right)}
\leq C_2 M^{1-\alpha +\epsilon}.
\end{align*}
We proceed similarly with the second term. Applying Karamata's theorem \cite[Proposition 1.5.8]{bgt} and Potter bounds we obtain that for $t$ small enough
\begin{multline*}
\psiI \int_{M/\psiI}^\delta \PP \left( \norm{X_t}> u \right)\ud u \leq t\psiI  \int_{M/\psiI}^\delta \psi^*(u^{-1})\, \ud u \\ 
\leq C_3 M(\alpha +1)^{-1} t\, \psi^*\left( \psiI /M \right)\\
\leq 
C_4 M(\alpha +1)^{-1} \frac{\psi^*\left( \psiI /M \right)}{\psi^*\left( \psiI \right)}\leq C_5 (\alpha +1)^{-1}M^{1-\alpha +\epsilon}.
\end{multline*}
 Finally, letting $M$ to infinity we obtain
\begin{align*}
\lim_{t\to 0}\psiI \,\mathrm{I}_1 = \frac{\pi^{(d-1)/2}}{\Gamma\left((d+1)/2\right)}\Per (\Omega)\int_{0}^\infty s^d p_1^{(\alpha)}(se_d)\ud s .
\end{align*}
It is known that, see e.g. \cite[Lemma 4.1]{Valverde2},
\begin{align*}
\int_{0}^\infty s^d p_1^{(\alpha)}(se_d)\ud s  = \pi^{-(d+1)/2}\Gamma ((d+1)/2)\Gamma(1-1/\alpha)
\end{align*}
and we conclude the result.
\end{proof}

%%%%%%%%%%%%%%%%%%%%%%%%%%%%%%%%%%%%

\subsection{Proof of Theorem \ref{Thm_X_bdd_variation}}
\begin{proof}[Proof of Theorem \ref{Thm_X_bdd_variation}]
We consider two cases: the first is $\gamma_0 =0$. Then $X_t = X_t^0$, where $\mathbf{X}^0$ is as in Lemma \ref{Lemma_2} and we have
\begin{align}\label{eq222}
t^{-1}H(t) = \int _{\Omega}\frac{1 - \PP (X_t +x \in \Omega)}{t}\, \ud x = t^{-1}(\g(0) - P_t\g (0)),
\end{align}
which converges to $ -\mathcal{L}\g(0) = \Per _{\mathbf{X}}(\Omega)$ according to Subsection \ref{sec_Levy} and Lemma \ref{Lemma_2}.

In the case $\gamma _0 \neq 0$,
we write $X_t = X_t^0 +t \gamma_0 $, where $\mathbf{X}^0$ is again as in Lemma \ref{Lemma_2}, and thus
\begin{align*}
t^{-1}H(t) &= \int _{\Omega}\frac{1 - \PP (X^0_t + t \gamma _0  +x \in \Omega)}{t}\, \ud x \\
	&= 	 \int _{\Omega}\frac{1 - \PP (X^0_t +x \in \Omega)}{t}\, \ud x +
	  \int _{\Omega}\frac{\PP (X^0_t +x \in \Omega) - \PP (X^0_t + t \gamma _0  +x \in \Omega)}{t}\, \ud x .
\end{align*}
By \eqref{eq222} we obtain
\begin{align}\label{eq222a}
\lim_{t\to 0} \int _{\Omega}\frac{1 - \PP (X^0_t +x \in \Omega)}{t}\, \ud x = \Per _{\mathbf{X}}(\Omega).
\end{align}
We denote by $p^0_t(\ud x)$
and $h^0$ the transition probabilities and the function introduced in \eqref{Pruitt_Function}, respectively,
corresponding to the process $\mathbf{X}^0$. 
For the second integral we proceed as follows
\begin{multline*}
	\int _{\Omega}\frac{\PP (X^0_t +x \in \Omega) - \PP (X^0_t + t \gamma _0  +x \in \Omega)}{t}\, \ud x \\
	=	\int_{\RR^d} \IndFun{\Omega}(x)t^{-1}\int_{\RR^d}\left( \IndFun{\Omega}(y+x) - \IndFun{\Omega}(y+x+t \gamma_0 ) \right)\, p_t^0(\ud y)\, \ud x\\
	= \int_{\RR^d}\frac{\g(y) - \g(y + t \gamma _0  )}{t}\, p_t^0(\ud y).
\end{multline*}
We take $\epsilon >0$. Using (iv) of Proposition \ref{g_properties} and \eqref{estimate_Pruitt} we write
\begin{align}\label{eq111}
\begin{aligned}
\Big| \int_{\norm{y}>\epsilon t} \frac{\g(y) - \g(y + t \gamma _0  )}{t}\, p_t^0(\ud y) \Big| &\leq \norm{\gamma_0}\,\PP\big(\norm{X^0_t}>\epsilon t\big)\leq C \norm{\gamma_0}\,t h^0(\epsilon t)\\
=  C \norm{\gamma_0}t \int _{\RR^d} \left( 1\wedge \frac{\norm{y}^2}{(\epsilon t)^2} \right)\,\nu (\ud y) 
&=  C \norm{\gamma_0}t \int _{\RR^d} \left( 1\wedge \frac{\norm{y}}{\epsilon t} \right)^2 \,\nu (\ud y) \\
\leq  C \norm{\gamma_0}t \int _{\RR^d} \left( 1\wedge \frac{\norm{y}}{\epsilon t} \right) \,\nu (\ud y) 
 &= C \norm{\gamma_0} \int _{\RR^d} \left( t\wedge \frac{\norm{y}}{\epsilon } \right) \,\nu (\ud y) .
 \end{aligned}
\end{align}
By the Lebesgue dominated convergence theorem the last quantity tends to zero as $t$ goes to zero. For the other part of the integral we have
\begin{multline*}
\int_{\norm{y}\leq \epsilon t} \frac{\g(y) - \g(y + t \gamma _0  )}{t}\, p_t^0(\ud y) 
   =	\int_{\norm{y}\leq \epsilon t} \frac{\g(y) - \g(0 )}{t}\, p_t^0(\ud y)	\\
	 + \int_{\norm{y}\leq \epsilon t} \frac{\g(0) - \g(t \gamma _0  )}{t}\, p_t^0(\ud y) +
	 	\int_{\norm{y}\leq \epsilon t} \frac{\g(\gamma_0 t) - \g(y + t \gamma _0  )}{t}\, p_t^0(\ud y)
	 	= I_1 + I_2 +I_3.
\end{multline*}
Handling with $I_1$ is easy in front of condition (iv) of Proposition \ref{g_properties}. Indeed,
\begin{align*}
|I_1| \leq \int_{\norm{y}\leq \epsilon t} \frac{|\g(y) - \g(0)|}{\norm{y}}\cdot \frac{\norm{y}}{t}\, p_t^0(\ud y)\leq 
L\epsilon  \int_{\norm{y}\leq \epsilon t}  p_t^0(\ud y)
\leq L\epsilon .
\end{align*}
Similarly we estimate $I_3$. The integral $I_2$ equals
\begin{align*}
I_2 = \frac{\g(0) - \g\left(( \norm{\gamma _0}t ) \frac{  \gamma _0}{\norm{\gamma _0}}\right)}{\norm{\gamma _0 } t}\norm{\gamma _0}
\int_{\norm{y}\leq \epsilon t} p_t^0(\ud y).
\end{align*}
Using \eqref{g_lip_limit} we obtain
\begin{align}\label{eq333}
\lim_{t\to 0}\frac{\g(0) - \g\left((\norm{\gamma _0} t) \frac{\gamma _0}{\norm{\gamma _0}} \right)}{\norm{\gamma _0}t} = 
\frac{V_{\frac{\gamma _0}{\norm{\gamma _0}}}(\Omega)}{2}.
\end{align}
Moreover, we claim that 
\begin{align*}
\lim_{t\to 0}\int_{\norm{y}\leq \epsilon t} p_t^0(\ud y) =1.
\end{align*}
Indeed, we have
\begin{align*}
\int_{\norm{y}\leq \epsilon t} p_t^0(\ud y)  = 1- \PP\left( \norm{X_t^0}>\epsilon t \right).
\end{align*}
Proceeding in the same fashion as in \eqref{eq111} we show that $\PP\left( \norm{X_t^0}>\epsilon t \right)$ tends to zero as $t$ goes to zero, which gives the claim.
Finally, equations \eqref{eq222a} and \eqref{eq333} imply the result.
\end{proof}
%%%%%%%%%%%%%%%%%%%%

\bibliographystyle{plain}

\begin{thebibliography}{10}

\bibitem{Valverde2}
L.~Acu{\~n}a~Valverde.
\newblock Heat content estimates over sets of finite perimeter.
\newblock {\em J. Math. Anal. Appl.}, 441(1):104--120, 2016.

\bibitem{Valverde1}
L.~Acu{\~n}a~Valverde.
\newblock Heat content for stable processes in domains of $\mathbb{R}^d$.
\newblock \textit{J. Geom. Anal.}, doi:10.1007/s12220-016-9688-9, 2016.

\bibitem{Valverde3}
L.~Acu{\~n}a~Valverde.
\newblock On the one dimensional spectral heat content for stable processes.
\newblock {\em J. Math. Anal. Appl.}, 441(1):11--24, 2016.

\bibitem{Ambrosio_2000}
L.~Ambrosio, N.~Fusco, and D.~Pallara.
\newblock {\em Functions of bounded variation and free discontinuity problems}.
\newblock Oxford Mathematical Monographs. The Clarendon Press, Oxford
  University Press, New York, 2000.

\bibitem{Appl}
D.~Applebaum.
\newblock {\em L\'{e}vy Processes and Stochastic Calculus}, second edition.
\newblock Cambridge University Press, 2009.

\bibitem{bgt}
N.H. Bingham, Dr~C.M. Goldie, and J.L. Teugels.
\newblock {\em Regulary variation}, volume~27 of {\em Encyclopedia of
  Mathematics and its Applications}.
\newblock Cambridge University Press, 1989.

\bibitem{BGR}
K.~Bogdan, T.~Grzywny, and M.~Ryznar.
\newblock Density and tails of unimodal convolution semigroups.
\newblock {\em J. Funct. Anal.}, 266(6):3543--3571, 2014.

\bibitem{CGT}
W.~Cygan, T.~Grzywny, and B.~Trojan.
\newblock Asymptotic behavior of densities of unimodal convolution semigroups.
\newblock to appear in T. Am. Math. Soc., DOI: 10.1090/tran/6830.

\bibitem{Fusco}
N.~Fusco, V.~Millot, and M.~Morini.
\newblock A quantitative isoperimetric inequality for fractional perimeters.
\newblock {\em J. Funct. Anal.}, 261(3):697 -- 715, 2011.

\bibitem{Galerne}
B.~Galerne.
\newblock Computation of the perimeter of measurable sets via their
  covariogram. applications to random sets.
\newblock {\em Image Analysis \& Stereology}, 30(1):39--51, 2011.

\bibitem{Grzywny1}
T.~Grzywny.
\newblock On {H}arnack inequality and {H}\"older regularity for isotropic
  unimodal {L}\'evy processes.
\newblock {\em Potential Anal.}, 41(1):1--29, 2014.

\bibitem{Knopova_Schilling}
V.~Knopova and R.~Schilling.
\newblock A note on the existence of transition probability densities of {L}\'{e}vy processes.
\newblock {\em Forum Math.}, 25(1):125–-149, 2013.

\bibitem{Miranda1}
M.~Jr. Miranda, D.~Pallara, F.~Paronetto, and M.~Preunkert.
\newblock On a characterisation of perimeters in $\mathbb{R}^n$ via heat
  semigroup.
\newblock {\em Ric. Mat.}, 44:615–--621, 2005.

\bibitem{Miranda2}
M.~Jr. Miranda, D.~Pallara, F.~Paronetto, and M.~Preunkert.
\newblock Short-time heat flow and functions of bounded variation in
  $\mathbb{R}^n$.
\newblock {\em Ann. Fac. Sci. Toulouse}, 16:125–--145, 2007.

\bibitem{Preunkert}
M.~Preunkert.
\newblock A semigroup version of the isoperimetric inequality.
\newblock {\em Semigroup Forum}, 68:233–--245, 2004.

\bibitem{Pruitt}
W.E. Pruitt.
\newblock The growth of random walks and {L}\'{e}vy processes.
\newblock {\em Ann. Probab.}, 9:948--956, 1981.

\bibitem{Sato}
K.~Sato.
\newblock {\em L\'evy processes and infinitely divisible distributions},
  volume~68 of {\em Cambridge Studies in Advanced Mathematics}.
\newblock Cambridge University Press, 1999.

\bibitem{vanDenBerg_4}
M.~van~den Berg.
\newblock Heat content and {B}rownian motion for some regions with a fractal
  boundary.
\newblock {\em Probab. Theory Rel.}, 100(4):439--456, 1994.

\bibitem{vanDenBerg1_POT}
M.~van~den Berg.
\newblock Heat flow and perimeter in {$\mathbb{R}^m$}.
\newblock {\em Potential Anal.}, 39(4):369--387, 2013.

\bibitem{vanDenBerg3}
M.~van~den Berg, E.~B. Dryden, and T.~Kappeler.
\newblock Isospectrality and heat content.
\newblock {\em Bull. Lond. Math. Soc.}, 46(4):793--808, 2014.

\bibitem{vanDenBerg1}
M.~van~den Berg and P.~Gilkey.
\newblock Heat flow out of a compact manifold.
\newblock {\em J. Geom. Anal.}, 25(3):1576--1601, 2015.

\bibitem{vanDenBerg2}
M.~van~den Berg and K.~Gittins.
\newblock Uniform bounds for the heat content of open sets in {E}uclidean
  space.
\newblock {\em Differential Geom. Appl.}, 40:67--85, 2015.

\bibitem{vanDenBerg5}
M.~van~den Berg and J.F. Le~Gall.
\newblock Mean curvature and the heat equation.
\newblock {\em Math. Z.}, 215(3):437--464, 1994.

\bibitem{Zabczyk}
J.~Zabczyk.
\newblock Sur la th\'{e}orie semi-classique du potentiel pour les processus \`{a} accroissements
ind\'{e}pendants. 
\newblock {\em Studia Math.}, 35:227–-247, 1970.
 
\end{thebibliography}

\end{document}